\documentclass{article}
\usepackage[utf8]{inputenc}
\usepackage[percent]{overpic}
\usepackage{amsmath}
\usepackage{amssymb}
\usepackage{tikz}
\usepackage{tikz-cd}
\usepackage{mathtools}
\usepackage{stmaryrd}
\usepackage{amsthm}
\usepackage{array} 
\usepackage[english]{babel}
\usepackage{enumitem}
\usepackage{lmodern}
\usepackage{xfrac}  

\usepackage[paper=a4paper, margin=2.5cm]{geometry}

\usepackage{hyperref}
\hypersetup{
    unicode=false,          
    pdftoolbar=true,        
    pdfmenubar=true,        
    pdffitwindow=false,     
    pdfstartview={FitH},    
    pdftitle={Gorenstein algebras and toric bundles},    
    pdfauthor={Khovanskii, A. G.; Monin, L.},     
    pdfkeywords= {Gorenstein algebras} {Toric bundles} {Spherical Varieties}  {Ring of conditions} 
    pdfnewwindow=true,      
    colorlinks=true,       
    linkcolor=blue,          
    citecolor=red,        
    filecolor=magenta,      
    urlcolor=cyan           
}

\hfuzz=5.002pt


\theoremstyle{plain}
\newtheorem{theorem}{Theorem}[section]
\newtheorem{lemma}[theorem]{Lemma}
\newtheorem{proposition}[theorem]{Proposition}
\newtheorem{corollary}[theorem]{Corollary}

\theoremstyle{definition}
\newtheorem{definition}[theorem]{Definition}
\newtheorem{remark}[theorem]{Remark}
\newtheorem{example}[theorem]{Example}


\newcommand{\rleft}{\mathopen{}\mathclose\bgroup\left}
\newcommand{\rright}{\aftergroup\egroup\right}

\newcommand{\C}{{\mathbb{C}}}

\newcommand{\kk}{{\normalfont\mathbf{k}}}
\newcommand{\R}{{\mathbb{R}}}
\newcommand{\Z}{{\mathbb{Z}}}

\newcommand{\Q}{{\mathbb{Q}}}

\newcommand{\m}{{\mathfrak{m}}}

\newcommand{\Dm}{{\mathcal{D}}}

\newcommand{\Pm}{{\mathcal{P}}}

\newcommand{\cL}{{\mathcal{L}}}

\newcommand{\Ann}{{\mathrm{Ann}}}

\newcommand{\diff}{\mathop{}\!d}
\newcommand{\Diff}{{\mathrm{Diff}}}

\newcommand{\Hom}{{\mathrm{Hom}}}

\newcommand{\Sym}{{\mathrm{Sym}}}

\newcommand{\Exp}{{\mathrm{Exp}}}


\usepackage{pgf,tikz}
\usepackage{mathrsfs}
\usetikzlibrary{arrows}
\definecolor{uuuuuu}{rgb}{0.26666666666666666,0.26666666666666666,0.26666666666666666}
\definecolor{yqyqyq}{rgb}{0.5019607843137255,0.5019607843137255,0.5019607843137255}
\definecolor{uququq}{rgb}{0.25098039215686274,0.25098039215686274,0.25098039215686274}


\title{Gorenstein algebras and toric bundles}
\author{Askold Khovanskii}
\author{
  Askold Khovanskii
  \and
  Leonid Monin
}
\date{}

\newcommand{\Addresses}{{
  \bigskip
  \footnotesize

  A.~Khovanskii, \textsc{Department of Mathematics, University of Toronto, Toronto, Canada; Moscow Independent University, Moscow, Russia.}\par\nopagebreak
  \textit{E-mail address}: \texttt{askold@math.utoronto.ca}

  \medskip

  L.~Monin (Corresponding author), \textsc{Max Planck Institute for Mathematics in the Sciences, Leipzig, Germany}\par\nopagebreak
  \textit{E-mail address}: \texttt{leonid.monin@mis.mpg.de}
}}

\makeatletter
\newcommand{\subjclass}[2][1991]{%
  \let\@oldtitle\@title%
  \gdef\@title{\@oldtitle\footnotetext{#1 \emph{Mathematics subject classification.} #2}}%
}
\newcommand{\keywords}[1]{%
  \let\@@oldtitle\@title%
  \gdef\@title{\@@oldtitle\footnotetext{\emph{Key words and phrases.} #1.}}%
}
\makeatother

\subjclass[2020]{Primary 13H10, 55N45 Secondary 14M05, 	14M27}
\keywords{Gorenstein algebras, toric bundles, spherical varieties, cohomology ring, Newton-Okounkov bodies, ring of conditions}

\begin{document}
\maketitle
\begin{abstract}
 We study commutative algebras with Gorenstein duality, i.e. algebras $A$ equipped with a non-degenerate bilinear pairing such that $\langle ac,b\rangle=\langle a,bc\rangle$ for any $a,b,c\in A$. If an algebra $A$ is Artinian, such pairing exists if and only if $A$ is Gorenstein. We give a description of algebras with Gorenstein duality as a quotients of the ring of differential operators by the annihilator of an explicit polynomial (or more generally formal polynomial series). This provides a calculation of Macaulay's inverse systems for (not necessarily Artinian) algebras with Gorenstein duality. Our description generalizes previously know description of graded algebras with Gorenstein duality generated in degree 1.

 Our main motivation comes from the study of even degree cohomology rings. In particular, we apply our main result to compute the ring of cohomology classes of even degree of toric bundles and the ring of conditions of horospherical homogeneous spaces. 
\end{abstract}


\section{Introduction}
\label{sec:intro}

In this paper we study Macaulay's inverse systems of algebras with Gorenstein duality. We say that an algebra $A$ over a field $\kk$ has a \emph{Gorenstein duality} if there exists a non-degenerate bilinear pairing on $A$ such that  $\langle ac,b\rangle=\langle a,bc\rangle$ for any $a,b,c\in A$. If algebra $A$ is a finite dimensional vector space over $\kk$, such a pairing exists if and only if $A$ is a Gorenstein ring.

Let $\kk$ be a field of characteristic $0$ and consider a polynomial ring $R=\kk[x_1,\ldots,x_n]$. Recall that the \emph{polynomial differential operator with constant coefficients} is a differential operator on $R$ of the form
\[
D= \sum_{i_1,\ldots i_n} a_{i_1}\ldots a_{i_n} \left(\partial/\partial x_1\right)^{i_1}\ldots\left(\partial/\partial x_n\right)^{i_n}.
\]
The ring of differential operators with constant coefficients forms a polynomial ring $\kk[\partial/\partial x_1,\ldots,\partial/\partial x_n]$ which acts naturally on $R$.

For an ideal $I\subset\kk[\partial/\partial x_1,\ldots,\partial/\partial x_n]$ its \emph{inverse system} is an ideal $I^\perp\subset R$ of polynomials which annihilate~$I$:
\[
I^\perp = \big\{f \,\mid\, D\cdot f = 0, \forall D\in I\big\}\subset R.
\]
Inverse systems were introduced by Macaulay (see \cite{Mac1,Mac2, Gr}) and is a convenient tool both in commutative algebra and analysis (see \cite{m1,m2,m3,b1,b2} for some recent developments). In particular, the quotient algebra $\kk[x_1,\ldots,x_n]/I$ is a zero-dimensional Gorenstein ring if and only if $I^\perp$ is a principle ideal \cite[Exercise 21.7]{Eis}.

For a finite dimensional vector space $V$ over $\kk$, one can identify the ring of differential operators with constant coefficients on $V$ with symmetric algebra $\Sym(V)$. For a polynomial function $f: V \to \kk$, let us denote by $\Ann(f):= \langle f \rangle^\perp$ its annihilator in the ring of differential operators with constant coefficients. Thus for any polynomial $f$ on $V$, the factor algebra $\Sym(V)/\Ann(f)$ has a Gorenstein duality, and every zero-dimensional Gorenstein factor algebra of $\Sym(V)$ is constructed in this way.

In this paper we extend these results in the following way. First, we extend the above correspondence to non-Artinian commutative algebras with Gorenstein duality over a field of characteristic $0$. We show that for a (possibly infinite dimensional) vector space $V$ the factor algebras of $\Sym(V)$ with Gorenstein duality are in bijection with \emph{formal polynomial series} on $V$ (Corollary~\ref{cor:gorfact}). We also give an explicit formula for the generator of the  Macaulay's inverse system of factor algebras of $\Sym(V)$ with Gorenstein duality (Theorem~\ref{thm:potential}). 

Our main motivation to study algebras with Gorenstein duality comes from computation of cohomology rings. Indeed, let $M$ be a smooth closed orientable manifold of even dimension and let 
\[
A^*(M) := \bigoplus_{i=0}^{(\dim_\R M)/2} H^{2i}(M,\R),
\]
be the ring of cohomology classes of even degree. Then $A^*(M)$ is a commutative ring with Gorenstein duality given by Poincar\'e duality on $H^*(M)$. Thus our description of algebras with Gorenstein duality provides a way to compute $A^*(M)$. 

Such approach was used to compute the cohomology ring of smooth projective toric varieties in \cite{KP} and the cohomology rings of full flag varieties in \cite{KavehVolume}. These works, however, crucially use that the above cohomology rings are generated in degree 2. Our explicit calculation of Macaulay's  inverse systems of algebras with Gorenstein duality (Theorem~\ref{thm:potential}) is applicable for computation of much wider class of cohomology rings. As an illustration, we apply Theorem~\ref{thm:potential} to computation of the rings of cohomology classes of even degrees of toric bundles (Theorem~\ref{thm:cohtorbun}). In particular, this includes the computation of cohomology rings of smooth, toroidal horospherical varieties and the ring of conditions of horospherical homogeneous spaces (Theorem~\ref{thm:cond}).

\subsection{Organization of the paper}
Section~\ref{sec:gor} provides the necessary background on algebras with Gorenstein duality. In Section~\ref{sec:poly} we define spaces of polynomials and formal polynomial series on (possibly infinite dimensional) vector space $V$ and study the action of differential operators with constant coefficients on these spaces.
In Section~\ref{sec:inverse} we prove our main result on the explicit calculation of Macaulay's  inverse systems of algebras with Gorenstein duality. We also consider two important examples of local algebras and graded algebras with duality. Finally,
Section~\ref{sec:toric} is devoted to the application of our main results to the computation of cohomology rings of toric bundles.

\subsection*{Acknowledgements}
We thank Ben Briggs and \"Ozg\"ur Esentepe for several helpful conversations.
The first author is partially supported by the Canadian Grant No.~156833-17.
Big part of this project was done while the second author was at a research stay at MPI MiS, he thanks MPI MiS for its hospitality.
 

\section{Algebras with Gorenstein duality}\label{sec:gor}
In this section we study (not-necessarily Artinian) algebras with Gorenstein duality and describe a construction of such algebras recently obtained in \cite{uspehi}.

\subsection{Algebras with Gorenstein duality} Let $\kk$ be a field of characteristic $0$, in this paper we assume $\kk= \Q, \R$ or $\C$. Let $A$ be a commutative algebra with identity over $\kk$. A linear function $\ell \colon A\to \kk$ defines a symmetric, bilinear pairing on $A$ via:
\[
\langle a,b\rangle_{\ell} := \ell(a\cdot b) \text{ for any } a, b\in A. 
\]
\begin{definition}
  A pairing $\langle\cdot,\cdot\rangle_\ell$ on algebra $A$ is called \emph{Gorenstein duality} if it is non-degenerate. 
\end{definition}

The following lemma gives another characterization of Gorenstein duality.
\begin{lemma}
  Let $\ell\colon  A\to \kk$ be any linear function, then
  \[
  \langle ab,c\rangle_\ell =\langle a,bc\rangle_\ell \quad \forall a,b,c\in A.\footnote{For a non-commutative algebras $A$ such non-degenerate pairing is usually called Frobenius pairing. However, since we work with commutative algebras, we use term Gorenstein duality.}
  \]
  Moreover, any symmetric bilinear pairing $\langle\cdot,\cdot\rangle$ such that $\langle ab,c\rangle=\langle a,bc\rangle$ for any $a,b,c\in A$ is given by some linear function.
\end{lemma}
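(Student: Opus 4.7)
The plan is to handle the two assertions separately. Both should follow quickly from unwinding the definitions, so my aim is to organise the argument so that each direction is a one-line computation rather than a search for the right gadget.

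For the first assertion, given $\ell\colon A\to\kk$ and the induced pairing $\langle a,b\rangle_\ell=\ell(ab)$, I would simply invoke associativity of multiplication in $A$: write $\langle ab,c\rangle_\ell=\ell((ab)c)=\ell(a(bc))=\langle a,bc\rangle_\ell$. Nothing else is needed, and commutativity is not even used here.

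For the converse, the key idea is that the unit element of $A$ lets us recover $\ell$ from $\langle\cdot,\cdot\rangle$. Concretely, given a symmetric bilinear pairing $\langle\cdot,\cdot\rangle$ with $\langle ab,c\rangle=\langle a,bc\rangle$, I would define the linear functional
\[
\ell(a) := \langle a,1\rangle.
\]
Linearity of $\ell$ is inherited from bilinearity of the pairing. To verify $\langle a,b\rangle = \ell(ab)$ for all $a,b\in A$, I would apply the hypothesis with $c=1$:
\[
\ell(ab) = \langle ab,1\rangle = \langle a,b\cdot 1\rangle = \langle a,b\rangle.
\]
This shows $\langle\cdot,\cdot\rangle=\langle\cdot,\cdot\rangle_\ell$, completing the correspondence.

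There is essentially no obstacle; the only subtlety worth flagging is that the existence of a multiplicative identity in $A$ is used in the second part (to write $\ell(a)=\langle a,1\rangle$), so this is where the assumption that $A$ is unital, stated at the beginning of the subsection, enters the argument. Uniqueness of $\ell$ is also automatic from the same formula $\ell(a)=\langle a,1\rangle$, which I would mention in passing even though it is not explicitly claimed in the statement.
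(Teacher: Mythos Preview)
Your proof is correct and follows essentially the same approach as the paper: both parts are handled by one-line computations using associativity and the unit, with the converse obtained by setting $\ell(a)=\langle a,1\rangle$ (the paper uses $\langle 1,a\rangle$, which is the same by symmetry) and then verifying $\langle a,b\rangle=\ell(ab)$ via the associativity hypothesis with one slot equal to $1$.
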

\begin{proof}
For the first part it is enough to notice that for all  $a,b,c\in A$
\[
\langle ab,c\rangle_\ell = \ell(abc) = \langle a,bc\rangle_\ell.
\]
For the second part, define function $\ell\colon A\to \kk$ via $\ell(a) = \langle 1,a\rangle$.
Since $\langle\cdot,\cdot\rangle$ is symmetric and bilinear, the function $\ell$ is linear. Moreover, for any $a,b\in A$
\[
\langle a,b \rangle = \langle 1,ab \rangle = \ell(ab) = \langle a,b \rangle_\ell,
\]
which finishes the proof.
\end{proof}

In the case when algebra $A$ is Artinian, the existence of Gorenstein duality implies that $A$ is a Gorenstein algebra (in fact these two properties are equivalent). In what follows we will not assume an algebra $A$ to be Artinian, in this case an algebra with a Gorenstein duality is not necessarily Gorenstein.

Let $W \subset A$ be a (possibly infinite dimensional) vector space which generates algebra $A$ multiplicatively, i.e. every element $a\in A$ can be represented as a finite sum:
\[
a= P_1(v_1,\ldots,v_k),
\]
 with $v_i\in V$ and $P\in \kk[x_1,\ldots, x_k]$ is a polynomial with coefficients in $\kk$. Let further $\pi\colon V\to W$ be a surjective linear map. A map $\pi$ induces an algebra homomorphism
\[
\pi\colon\Sym(V)\to A, 
\]
which we will denote with the same symbol. Since $W$ generates algebra $A$, the homomorphism $\pi$ above is surjective. Let $\widetilde \ell = \ell\circ \pi$ be a linear function on $\Sym(V)$ induced by $\ell$.  Then the following description of commutative algebras with Gorenstein duality was obtained in \cite{uspehi}.

\begin{theorem}[\cite{uspehi}]\label{thm:gor}
 The kernel of the map $\pi\colon\Sym(V)\to A$ is given by
 \[
 \ker(\pi) = \{P\in \Sym(V) \,|\, \widetilde\ell(P\cdot Q)=0 \text{ for any } Q\in \Sym(V)\},
 \]
 where $\widetilde\ell= \pi^* \ell$. Moreover, for  a (possibly infinite dimensional) vector space $V$, factor algebras of $\Sym(V)$ with chosen Gorenstein duality are in bijection with linear functions on $\Sym(V)$. 
\end{theorem}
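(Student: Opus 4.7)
The plan is to prove the set equality of kernels first, and then to bootstrap it into the bijection statement. For the inclusion $\ker(\pi)\subseteq\{P\mid\widetilde\ell(P\cdot Q)=0\ \forall Q\}$, I simply note that if $\pi(P)=0$ then $\widetilde\ell(PQ)=\ell(\pi(P)\pi(Q))=\ell(0)=0$ for every $Q$. This direction does not use non-degeneracy at all.

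The reverse inclusion is where the Gorenstein hypothesis on $A$ enters. Suppose $P\in\Sym(V)$ satisfies $\widetilde\ell(PQ)=0$ for every $Q\in\Sym(V)$. Setting $a:=\pi(P)\in A$, this says $\langle a,\pi(Q)\rangle_\ell=\ell(a\cdot\pi(Q))=0$ for every $Q$. Since $W$ generates $A$ and $\pi$ is surjective, $\pi(Q)$ ranges over all of $A$, so $\langle a,b\rangle_\ell=0$ for every $b\in A$. Non-degeneracy of the Gorenstein pairing forces $a=0$, i.e.\ $P\in\ker(\pi)$.

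For the bijection statement, I would describe two mutually inverse constructions. In one direction, a factor algebra $\pi\colon\Sym(V)\twoheadrightarrow A$ with chosen duality given by $\ell\colon A\to\kk$ is sent to the linear functional $\widetilde\ell=\ell\circ\pi\in\Sym(V)^*$. In the opposite direction, starting from a linear functional $\lambda\in\Sym(V)^*$, I would form
\[
I_\lambda=\{P\in\Sym(V)\mid \lambda(P\cdot Q)=0\ \forall Q\in\Sym(V)\}.
\]
The key routine verifications are: (i) $I_\lambda$ is an ideal, which follows from the trivial identity $\lambda((PR)\cdot Q)=\lambda(P\cdot(RQ))$; (ii) $I_\lambda\subseteq\ker\lambda$, via $\lambda(P)=\lambda(P\cdot 1)$, so $\lambda$ descends to a linear functional $\bar\lambda$ on $A_\lambda:=\Sym(V)/I_\lambda$; and (iii) the induced pairing $\langle\cdot,\cdot\rangle_{\bar\lambda}$ on $A_\lambda$ is non-degenerate by the very definition of $I_\lambda$.

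What remains is to check that these two constructions are inverse to each other. One composition is immediate from the construction: starting with $\lambda$, forming $(A_\lambda,\bar\lambda)$, and taking the associated linear functional gives back $\bar\lambda\circ\pi=\lambda$. The other composition is precisely where the first part of the theorem is used: starting from $(A,\ell)$, forming $\widetilde\ell=\ell\circ\pi$, and building $I_{\widetilde\ell}$ recovers $\ker(\pi)$ by the kernel equality just proved, hence $A_{\widetilde\ell}\cong A$ compatibly with the dualities. The main obstacle here is not computational but conceptual: one has to make sure that the statement really is an equivalence of the data $(A,\ell)$ rather than merely of underlying algebras, and this is guaranteed precisely because the forward map records the linear functional along with the algebra structure, and the kernel equality turns the existence of a Gorenstein duality on a factor of $\Sym(V)$ into a statement about a single linear functional on $\Sym(V)$.
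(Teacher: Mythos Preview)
Your argument is correct and is the standard one: the easy inclusion $\ker(\pi)\subseteq I_{\widetilde\ell}$ follows from $\pi$ being a ring homomorphism, the reverse inclusion uses surjectivity of $\pi$ together with non-degeneracy of $\langle\cdot,\cdot\rangle_\ell$, and the bijection is obtained by checking that $\lambda\mapsto(\Sym(V)/I_\lambda,\bar\lambda)$ and $(A,\ell)\mapsto\ell\circ\pi$ are mutually inverse, with the kernel identity supplying the nontrivial direction.

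There is, however, nothing to compare against in this paper: the theorem is quoted from \cite{uspehi} and no proof is given here. Your write-up is a clean self-contained proof of the cited result and matches what one would expect the original argument to be.
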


For a linear function $\widetilde\ell$, on $\Sym(V)$, we will denote by  $I_{\widetilde \ell}$ the corresponding ideal:
 \[
I_{\widetilde \ell} =\{P\in \Sym(V) \,|\, \widetilde\ell(P\cdot Q)=0 \text{ for any } Q\in \Sym(V)\}
 \]
Theorem~\ref{thm:gor} implies that the classification of algebras with Gorenstein duality boils down to a description of the dual space of $\Sym(V)$.

\section{Polynomials and formal polynomial series on infinite dimensional vector spaces}\label{sec:poly}
 In this section we give definitions of polynomial maps and formal polynomial series on an infinite dimensional vector space. We also define an action of symmetric algebra $\Sym(V)$ of a vector space $V$ on the space of polynomial maps and formal polynomial series on $V$. 

\subsection{Polynomial maps between infinite dimensional vector spaces} 

The notion of polynomial function on infinite dimensional vector space was used by Pukhlikov and the first author, our approach closely follows the one in \cite{PK92}. 

\begin{definition}
  We will call a function $f:V\to \kk$ a homogeneous polynomial of degree $d$ if its restriction to any finite dimensional vector subspace  of $V$ is a homogeneous polynomial of degree $d$. A polynomial on $V$ is a finite sum of homogeneous polynomials.
\end{definition}

More generally, we will need a notion of a polynomial map between two (not-necessarily finite dimensional) vector spaces.

\begin{definition}\label{def:polycomp}
A map $f:V\to W$ between two vector spaces is called a polynomial map of degree $d$, if for any linear function $\ell \in W^\vee$ the composition $\ell\circ f:V\to \kk$ is a polynomial of degree $d$.
\end{definition}

One has an equivalent definition of a polynomial map between two vector spaces.

\begin{definition}\label{def:polydirect}
  A map $f:V\to W$ between two vector spaces is called a polynomial map of degree $d$, if for every finite dimensional subspace $V'\subset V$ the image $f(V')$ is contained in a finite dimensional subspace $W'\subset W$ and the restriction $f:V'\to W'$ is a polynomial map of degree $d$ (in the usual sense).
\end{definition}

\begin{lemma}\label{lem:polyeq}
Definitions~\ref{def:polycomp} and~\ref{def:polydirect} of polynomial maps between two vector spaces are equivalent.
\end{lemma}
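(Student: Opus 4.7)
The plan is to prove the two implications separately. For Def 3.3 $\Rightarrow$ Def 3.2, the argument is essentially automatic: given $\ell\in W^\vee$, to verify that $\ell\circ f$ is a polynomial of degree $d$ on $V$ in the sense of Def 3.1, restrict it to any finite dimensional $V'\subset V$. By Def 3.3 there is a finite dimensional $W'\supset f(V')$ with $f|_{V'}:V'\to W'$ polynomial of degree $d$ in the usual sense; composing with $\ell|_{W'}$ then gives the required polynomial bound on $\ell\circ f|_{V'}$.

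The substantive direction is Def 3.2 $\Rightarrow$ Def 3.3, and the core obstacle is to show that $f(V')$ is contained in a finite dimensional subspace of $W$ for every finite dimensional $V'\subset V$. The plan is to exploit polynomial interpolation. Let $n=\dim V'$ and let $M=\binom{n+d}{d}$ be the dimension of the space $\kk[V']_{\leq d}$ of polynomials of degree $\leq d$ on $V'$. A generic choice of points $p_1,\ldots,p_M\in V'$ makes the evaluation map $\kk[V']_{\leq d}\to\kk^M$ an isomorphism; such configurations exist because the non-degeneracy locus is Zariski open and contains $\kk$-points for $\kk=\Q,\R,\C$. Inverting this isomorphism yields a Lagrange-type basis $c_1,\ldots,c_M\in\kk[V']_{\leq d}$ such that every $p\in\kk[V']_{\leq d}$ satisfies the interpolation identity $p(t)=\sum_{j=1}^M c_j(t)\,p(p_j)$.

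Applying this identity to $p=\ell\circ f|_{V'}$ for an arbitrary $\ell\in W^\vee$ gives
\[
\ell(f(t)) \;=\; \sum_{j=1}^M c_j(t)\,\ell(f(p_j)) \;=\; \ell\!\left(\sum_{j=1}^M c_j(t)\,f(p_j)\right),
\]
and since the algebraic dual $W^\vee$ separates points of $W$, this forces $f(t)=\sum_{j} c_j(t)\,f(p_j)$ for all $t\in V'$. Hence $f(V')\subset W':=\lspan(f(p_1),\ldots,f(p_M))$ is finite dimensional. The remaining conclusion of Def 3.3 is then routine: choosing a basis $w_1,\ldots,w_m$ of $W'$, extending the dual basis to linear functionals $\ell_i\in W^\vee$ by picking an algebraic complement of $W'$ in $W$, and using $f|_{V'}(v)=\sum_i(\ell_i\circ f)(v)\,w_i$ exhibits $f|_{V'}:V'\to W'$ as a polynomial map of degree $d$ in the usual sense, since by hypothesis each coordinate $\ell_i\circ f$ is so.

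The only genuinely non-routine step is establishing the finite dimensionality of $f(V')$; once the interpolation identity and the separating property of the algebraic dual have been invoked, everything else reduces to standard linear algebra.
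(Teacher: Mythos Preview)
Your proof is correct; the interpolation argument for Def~3.2 $\Rightarrow$ Def~3.3 is sound, and the easy direction matches the paper's. The paper, however, handles the finite dimensionality of $W':=\lspan(f(V'))$ differently: rather than interpolating, it observes that $\phi\mapsto \phi\circ f|_{V'}$ defines a linear map $(W')^\vee\to \Sym_{\leq d}((V')^\vee)$ which is injective because $W'$ is spanned by $f(V')$, and concludes $\dim W'\le\dim\Sym_{\leq d}((V')^\vee)<\infty$ by a pure dimension count. This avoids choosing interpolation nodes or invoking Zariski openness. Your approach is more constructive---it produces an explicit formula $f(t)=\sum_j c_j(t)f(p_j)$ that simultaneously shows $f|_{V'}$ lands in a finite dimensional $W'$ and is polynomial there---whereas the paper's argument is a shorter dual-side injectivity trick. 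The two are essentially dual: you use surjectivity of evaluation at generic points of $V'$, the paper uses injectivity of the induced map on $(W')^\vee$; both ultimately rest on $\dim\kk[V']_{\leq d}<\infty$.
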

\begin{proof}
  Assume $f:V\to W$ is a polynomial map in the sense of Definitions~\ref{def:polydirect} and let $V'\subset V$ be a finite dimensional vector space. Then the map $f:V'\to span(f(V'))$ is a polynomial map between two finite dimensional vector spaces and, therefore, $\phi\circ f$ is a polynomial on $V'$ for any $\phi\in W^\vee$. 
  
  For the other direction assume $f:V\to W$ is a polynomial map in the sense of Definitions~\ref{def:polycomp} and let $V'\subset V$ be a finite dimensional vector space.  First we will show that $W' = span(f(V'))$ is finite. Indeed, since for any linear function $\phi\in W^\vee$, the compositions $\phi\circ f$ is a polynomial on $V'$ of degree less or equal to $d$, we have a linear map:
  \[
  (W')^\vee \to \Sym_{\leq d}\left((V')^\vee\right).
  \]
  Moreover, since $W' = span(f(V'))$, the map above is injective, and hence $\dim W'\leq \dim \Sym_{\leq d}\left((V')^\vee\right)< \infty$. The restriction $f:V'\to W'$ is polynomial since its composition with coordinate functions (with respect to any basis of $W'$) are polynomials. 
\end{proof}

The following construction of the polynomial mappings will be very important for this paper. Let $V$ be a vector space over $\kk$ and let $P = a_0+a_1x+\ldots+a_nx^n\in \kk[x]$ be a univariate polynomial. Then there is a polynomial mapping $\widetilde{P}:V\to \Sym(V)$ defined via:
\[
\widetilde{P}(v) = a_0+a_1v+\ldots+a_nv^n \in \Sym(V)
\]
The Example~\ref{ex:unipoly} below is a slight generalisation of this construction.

\begin{example}\label{ex:unipoly}
  Let $A$ be an algebra with identity and $\phi\colon V\to A$ be a linear map. Then every univariate polynomial $P= a_0+ a_1x +\ldots + a_n x^n\in A[x]$ with coefficients in $A$ defines a polynomial mapping $V\to A$ via
  \[
  \widetilde{P} (v) = a_0 + a_1\phi(v) + a_2 \phi(v)^2 +\ldots + a_n \phi(v)^n \in A. 
  \]
  In particular, every polynomial $P\in \kk[x]$ with coefficients in $\kk$ defines a polynomial map $\widetilde{P}:V\to A$.
\end{example}

\subsection{Action of symmetric algebra on the space of polynomial maps}

For a (possibly infinite dimensional) vector space $V$ we denote by $\Sym(V)$ its symmetric algebra, i.e. the quotient of tensor algebra of $V$ by two-sided ideal generated by elements of the form $v\otimes w-w\otimes v$. We identify $\Sym(V)$ with the space $\Diff(V)$ of differential operators with constant coefficients on $V$ in the following way. The zero degree component $\Sym_0(V)\simeq \kk$ corresponds to operators of multiplication by a number. For elements of positive degree we define the map
\[
\mathcal{D}\colon v_1\ldots v_k \mapsto L_{v_1}\ldots L_{v_k},
\]
where $L_v$ is the Gateaux derivative in the direction $v\in V$. For a map $f:V\to W$ its Gateaux derivative in the direction $v\in V$ is given by
\begin{equation}\label{eq:Gatoux}
    L_v f (x) = \lim_{t\to 0} \frac{f(x+tv)-f(x)}{t},
\end{equation}
if the limit exists (\cite{gateaux}).

\begin{remark}
For a general map $f:V\to W$ for the limit on the right had side of (\ref{eq:Gatoux}) to be defined, one need to choose a topology on the target vector space $W$. In this paper we would work with polynomial mappings, so for fixed $v$ and $x$, the image $f(x+tv)$ always belong to a finite dimensional subspace of $W$ and hence the limit $\lim_{t\to 0} \frac{f(x+tv)-f(x)}{t}$ is well defined. In other words, we endow $W$ with the topology for which a subset $U\subset W$ is open if and only if $U\cap W'$ is open for every finite dimensional vector subspace $W'\subset W$.
\end{remark}

\begin{remark}
Since for the computation of the value of the Gateaux derivative $L_v f(x)$ of a polynomial mapping at a fixed point $x\in V$ we need to work with only finite dimensional vector spaces, we can define the  Gateaux derivative purely formally. This allows to extend all our results to any field $\kk$ of characteristic $0$.
\end{remark}

For an element $P\in \Sym(V)$, we denote by $\Dm_P$ the corresponding differential operator. It is easy to check that for a polynomial map $f:V\to W$ of degree $d$ and a vector $v\in V$, the Gateaux derivative $L_v f$ is a polynomial map of degree $d-1$ or 0. Therefore, the space of polynomial mappings has a structure of a graded module over $\Diff(V)$.

\begin{example}\label{ex:polar}
  Let $f:V\to \kk$ be a homogeneous polynomial of degree $d$, then for any $v\in V$, we have $\Dm_{v^d}\cdot f:= (L_v)^d \cdot f \equiv d! f(v)$ is a constant function.
\end{example}

\begin{example}[Continuation of Example~\ref{ex:unipoly}]\label{ex:Gatoux}
    Let $A$ be an algebra with identity and $\phi\colon V\to A$ be a linear map. Let $\widetilde{P}$ be a polynomial map $\widetilde P:V\to A$ given by a univariate polynomial $P\in A[x]$. Let further $v\in V$ be a vector, then one has 
    \[
    L_v \widetilde P = \phi(v)\cdot \widetilde{\partial_x P},
    \]
    where $ \widetilde{\partial_x P}$ is a polynomial map given by the usual derivative $\partial_x P$ of $P$. By linearity, it is enough to check the formula above for $P=a_ix^i$ being a monomial in $A[x]$. So we have:
    \[
    L_v\cdot (\widetilde{a_ix^i}) = \lim_{t\to 0} \frac{a_i\cdot\phi(x+tv)^i-a_i\cdot\phi(x)^i}{t} = \lim_{t\to 0}a_i\cdot\frac{(\phi(x)+t\phi(v))^i-\phi(x)^i}{t} = a_i\cdot \phi(v)\cdot i \phi(x)^{i-1}.
    \]
\end{example}

\subsection{Formal polynomial series}
In this subsection we introduce a more general notion of \emph{formal polynomial series} and \emph{formal polynomial series with values in a vector space $W$}. 

\begin{definition}
  For (not-necessarily finite dimensional) vector spaces $V$ and $W$, a formal polynomial series on $V$ with values in $W$ is a formal sum
\[
f = f_0 + f_1 + \ldots + f_i +\ldots, 
\]
where $f_i:V\to W$ is a homogeneous polynomial map of degree $i$. A formal polynomial series with values in $\kk$ is just called a \emph{formal polynomial series.}
\end{definition}

We can extend the action of $\Diff(V)$ on the space of formal 
polynomial series  $\kk\llbracket V \rrbracket$ via term-wise differentiation:
\[
 \Dm_P \left(\sum_{i=0}^\infty f_i\right) = \sum_{i=r}^\infty \Dm_P(f_i), 
\]
for a homogeneous polynomial $P\in \Sym_r(V)$  extended by linearity for general $P\in \Sym(V)$. For a formal polynomial series $F=\sum_{i=0}^\infty f_i$, let us denote by $F(0)$ its constant term $f_0\in \kk$.

As in the case of polynomial maps, in this paper we are mostly interested in the following special class of formal polynomial series.

\begin{example}[Continuation of Example~\ref{ex:unipoly}]\label{ex:power}
  As before, let $A$ be an algebra with identity and $\phi\colon V\to A$ be a linear map. Then every univariate formal power series $f= \sum_{i=0}^\infty a_i x^i\in A\llbracket x\rrbracket$ with coefficients in $A$ defines a formal polynomial series on $V$ with values in $A$ via
  \[
  \widetilde{f} = a_0 + a_1\phi(v) + a_2 \phi(v)^2 +\ldots + a_n \phi(v)^n + \ldots. 
  \]
  Note that each summand in the expression above is indeed a homogeneous polynomial mapping from $V$ to $A$, so their (formal) sum is a formal polynomial series with values in $A$.
 
 In particular, every formal power series every polynomial $f\in \kk\llbracket x \rrbracket$ with coefficients in $\kk$ defines a polynomial map $\widetilde{P}:V\to A$.
\end{example}

As before, the action of $\Sym(V)$ on formal polynomial series as in Example~\ref{ex:power} has an easy description.

\begin{proposition}\label{prop:der}
  Let $A$ be an algebra and $\phi\colon V\to A$ be a $\kk$-linear map. Let $\widetilde{f}$ be a formal polynomial series with values in $A$  given by a univariate power series $f(x)= \sum_{i=0}^\infty a_ix^i$ with $a_i\in A$. 
  Then for any $v\in V$, we have
  \[
  L_v  \widetilde{f} = \phi(v) \cdot \widetilde{\partial_x f},
  \]
  where $\widetilde{\partial_x f}$ is the formal polynomial series with values in $A$ given by the derivative $\partial_x f$ of $f$.
\end{proposition}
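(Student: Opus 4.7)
The plan is to reduce the claim to the polynomial case already treated in Example~\ref{ex:Gatoux} and then conclude by term-wise differentiation, which is how $\Diff(V)$ was defined to act on the space of formal polynomial series.

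First I would expand $\widetilde f$ into its homogeneous components. By the construction of Example~\ref{ex:power},
\[
\widetilde f = \sum_{i=0}^\infty \widetilde{a_i x^i},
\]
where each $\widetilde{a_i x^i}\colon V\to A$ is the homogeneous polynomial map of degree $i$ sending $v\mapsto a_i\phi(v)^i$. These summands are polynomial maps in the sense of Example~\ref{ex:unipoly}, so Example~\ref{ex:Gatoux} applies to each of them individually and yields
\[
L_v\,\widetilde{a_i x^i} \;=\; \phi(v)\cdot\widetilde{\partial_x(a_i x^i)} \;=\; \phi(v)\cdot\widetilde{i a_i x^{i-1}}.
\]

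Next, by the definition of the action of $\Diff(V)$ on formal polynomial series, $L_v$ acts term-wise; summing over $i$ gives
\[
L_v\,\widetilde f \;=\; \sum_{i=0}^\infty L_v\,\widetilde{a_i x^i} \;=\; \sum_{i=0}^\infty \phi(v)\cdot\widetilde{i a_i x^{i-1}} \;=\; \phi(v)\cdot \sum_{i=1}^\infty \widetilde{i a_i x^{i-1}} \;=\; \phi(v)\cdot\widetilde{\partial_x f},
\]
where in the penultimate step I factor the element $\phi(v)\in A$ out of the formal sum (which is legitimate because left-multiplication by a fixed element of $A$ acts homogeneous-degree by homogeneous-degree on formal polynomial series with values in $A$), and in the last step I use that $\partial_x f = \sum_{i=1}^\infty i a_i x^{i-1}$.

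There is essentially no obstacle here: the substantive computation was already carried out in Example~\ref{ex:Gatoux}, and the only thing to verify is that multiplication by $\phi(v)$ commutes with the formal summation defining a formal polynomial series. That is immediate from the definitions, since a formal polynomial series is by construction a formal sum of homogeneous polynomial maps and $a\mapsto \phi(v)\cdot a$ preserves homogeneous degree.
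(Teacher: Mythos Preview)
Your proof is correct and follows exactly the same approach as the paper: reduce to the monomial case via term-wise differentiation and invoke Example~\ref{ex:Gatoux}. The paper's proof is just a one-line version of what you wrote.
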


\begin{proof}
Since we apply the differentiation to $\widetilde f$ term-wise, it is enough to show that $L_v (a_i\phi(x)^i) = \phi(v)\cdot ia_i\phi(x)^{i-1}$, which was done in Example~\ref{ex:Gatoux}.
\end{proof}

As an immediate corollary, we obtain the following statement, which is the central part of this section.

\begin{corollary}
Let $A$ be an algebra and $\phi\colon V\to A$ be a $\kk$-linear map. Let $\Exp(x)$ be a formal polynomial series with values in $A$ given by a exponential power series $\sum_{i=0}^\infty \frac{x^i}{i!}$. Then for any $P\in \Sym(V)$
  \[
  \Dm_P \cdot \Exp(x) =  \phi(P) \cdot \Exp(x).
  \]
\end{corollary}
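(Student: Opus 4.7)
The plan is to iterate Proposition~\ref{prop:der}, using the special feature of the formal exponential that $\partial_x \Exp = \Exp$.

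First I would establish the statement for $P = v \in V$. Applying Proposition~\ref{prop:der} to the power series $f(x) = \sum_{i\geq 0} x^i/i!$ and noting that $\partial_x f = f$, so $\widetilde{\partial_x f} = \Exp$, gives
\[
L_v \Exp(x) = \phi(v) \cdot \Exp(x),
\]
which is the case $P=v$ since $\Dm_v = L_v$ and $\phi(v) \in A$ equals $\phi$ applied to $v$ viewed as an element of $\Sym_1(V)$.

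Next I would handle monomials $P = v_1 \cdots v_k \in \Sym_k(V)$ by induction on $k$. By definition $\Dm_{v_1\cdots v_k} = L_{v_1}\cdots L_{v_k}$. The key observation is that for any $w \in V$ and any $a \in A$ (regarded as a constant function $V\to A$), one has $L_w(a\cdot F) = a \cdot L_w F$, since multiplication by a constant element of $A$ commutes with the limit defining the Gateaux derivative. Applying this together with the base case, and using that the image of $\phi$ lies in the commutative algebra $A$, yields by induction
\[
L_{v_1}\cdots L_{v_k}\Exp(x) = \phi(v_1)\cdots\phi(v_k)\cdot \Exp(x) = \phi(v_1\cdots v_k)\cdot \Exp(x),
\]
where in the last equality we use that $\phi$ extends to an algebra homomorphism $\Sym(V)\to A$.

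Finally, by $\kk$-linearity of both sides in $P$ (the assignment $P \mapsto \Dm_P$ and $P\mapsto \phi(P)$ are both linear), the identity extends from monomials to arbitrary $P \in \Sym(V)$. The argument is essentially routine; the only point requiring care is checking that constants from $A$ pull out of the Gateaux derivative so that the induction closes, but this is immediate from the term-by-term definition of the action of $\Diff(V)$ on formal polynomial series.
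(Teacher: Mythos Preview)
Your proof is correct and follows exactly the approach of the paper, which simply says the statement follows from successive application of Proposition~\ref{prop:der} together with $\partial_x \Exp = \Exp$. You have merely spelled out the induction and the linearity step that the paper leaves implicit.
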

\begin{proof}
 The statement follows from successive application of Proposition~\ref{prop:der} and the fact that the derivative of the exponential power series is itself.
\end{proof}

Let us denote by $\kk\llbracket V \rrbracket$ the space of formal polynomial series on $V$. Given a formal polynomial series $f$ on $V$ with values in $W$, one can pullback the linear functions on $W$ to formal polynomial series on $V$. In other words one has a map:
\[
f^*: W^\vee \to \kk\llbracket V \rrbracket, \quad \ell \mapsto \ell \circ f_0+\ell\circ f_1 +\ldots 
\]
By Definition~\ref{def:polycomp} each of the compositions $\ell\circ f_i$ is a homogeneous polynomial of degree $i$, so the sum on the right hand side is indeed a formal polynomial series.

 We finish this section with the proof that the space of formal polynomial series is isomorphic to the dual vector space to symmetric algebra $\Sym(V)^\vee$. First we will need the following definition.

\begin{definition}\label{def:pair}
 Define a bilinear pairing $B$ between $\Diff(V)$ ($\simeq \Sym (V)$) and $\kk\llbracket V \rrbracket$ in the following way
  \[
  \langle \Dm_P, F\rangle := (\Dm_P\cdot F)(0),
  \]
  where $(\Dm_P\cdot F)(0)$ as before denotes the constant term of a formal polynomial series.
\end{definition}
The pairing from Definition~\ref{def:pair} defines a map $B: \kk\llbracket V \rrbracket \to \Sym(V)^\vee$ via $f \mapsto \langle\cdot,f\rangle.$

\begin{theorem}\label{prop:dual}
Let $V$ be a vector space and let as before, $\Exp$ be a formal polynomial map with values in $\Sym(V)$ given by the exponential power series. Then the map 
\[
\Exp^*: \Sym(V)^\vee \to \kk\llbracket V \rrbracket
\]
is the inverse of the map $B: \kk\llbracket V \rrbracket \to \Sym(V)^\vee$ induced by the pairing in Definition~\ref{def:pair}. In particular, both maps are isomorphisms of vector spaces.

\end{theorem}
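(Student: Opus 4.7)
The plan is to verify directly that the two compositions $B\circ \Exp^*$ and $\Exp^*\circ B$ are both the identity; once this is done, both maps are automatically isomorphisms of vector spaces.

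For $B\circ \Exp^* = \mathrm{id}_{\Sym(V)^\vee}$, I would take an arbitrary linear functional $\lambda\in \Sym(V)^\vee$ and, given $P\in \Sym(V)$, compute
\[
(B\circ \Exp^*)(\lambda)(\Dm_P) \;=\; \bigl(\Dm_P\cdot (\lambda\circ \Exp)\bigr)(0).
\]
Because $\lambda$ is $\kk$-linear and differentiation is performed term-wise, I can pull $\lambda$ outside the action of $\Dm_P$, obtaining $\lambda\bigl((\Dm_P\cdot \Exp)(0)\bigr)$. The Corollary preceding this theorem, applied to $\phi=\mathrm{id}\colon V\to \Sym(V)$, gives $\Dm_P\cdot \Exp = P\cdot \Exp$ as formal polynomial series with values in $\Sym(V)$, so the value at $0$ is simply $P\cdot \Exp(0)=P\cdot 1=P$. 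Hence $(B\circ \Exp^*)(\lambda)(\Dm_P)=\lambda(P)$ as required.

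For $\Exp^*\circ B = \mathrm{id}_{\kk\llbracket V\rrbracket}$, the main point is a degree/vanishing argument. Take $f=\sum_{i\ge 0} f_i$ with $f_i$ homogeneous of degree $i$; then $\Exp^*(B(f))$ is by definition the formal polynomial series whose value at $v\in V$ is
\[
B(f)\bigl(\Exp(v)\bigr) \;=\; \sum_{i=0}^{\infty} \frac{1}{i!}\,(\Dm_{v^i}\cdot f)(0) \;=\; \sum_{i=0}^{\infty}\sum_{j=0}^{\infty}\frac{1}{i!}\,(L_v^{\,i} f_j)(0).
\]
I would then observe that $L_v^{\,i} f_j$ is a polynomial map $V\to\kk$ of degree $j-i$ in the point of evaluation (for $i\le j$, and zero for $i>j$); so evaluation at the origin kills every term except $i=j$. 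For $i=j$ Example~\ref{ex:polar} gives $(L_v^{\,i} f_i)(0)=i!\,f_i(v)$. Summing, the $v$-dependent expression collapses to $\sum_{i\ge 0} f_i(v)=f(v)$, i.e.\ $\Exp^*(B(f))=f$.

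The one point requiring care, and what I view as the main subtlety rather than a genuine obstacle, is the bookkeeping distinguishing two roles of points of $V$: the direction $v$ in which the Gateaux derivative is taken (which becomes the variable of the resulting formal polynomial series via $\Exp$) and the point $x$ at which the derived polynomial map is evaluated (which is set to $0$ by $B$). Once one is careful that $L_v^{\,i}f_j(x)$ is homogeneous of degree $i$ in $v$ and of degree $j-i$ in $x$, the vanishing at $x=0$ for $i<j$ and the identification of the $i$-th homogeneous component with $f_i$ are immediate, and both identities follow.
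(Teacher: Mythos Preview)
Your argument is correct and, for the identity $\Exp^*\circ B=\mathrm{id}$, it coincides with the paper's proof: both reduce to the observation that $(L_v^{\,i}f_j)(0)$ vanishes unless $i=j$, and then invoke Example~\ref{ex:polar} to get $i!\,f_i(v)$.

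The one genuine difference is that you also verify $B\circ\Exp^*=\mathrm{id}_{\Sym(V)^\vee}$, using the Corollary $\Dm_P\cdot\Exp=P\cdot\Exp$, whereas the paper checks only $\Exp^*\circ B=\mathrm{id}$ and asserts that linearity makes this sufficient. In infinite dimensions a one-sided inverse between linear maps does not by itself yield a two-sided inverse, so your extra verification actually closes a gap that the paper glosses over. Your remark about keeping straight the two roles of $v$ (direction of differentiation) and $x$ (evaluation point set to $0$) is exactly the bookkeeping needed; once that is clear, both identities are routine.
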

\begin{proof}
Since both maps are linear, it is enough to check that $\Exp^*\circ B\colon \kk\llbracket V \rrbracket \to \kk\llbracket V \rrbracket$ is identity. This follows from a direct computation and Example~\ref{ex:polar}. Indeed let $f=\sum_{i=0}^\infty f_i$ be a formal polynomial series, then
\[
\left(\Exp^*\circ B\right)(f) = \sum_{i=0}^\infty \Dm_{\frac{x^i}{i!}}\cdot f(0)= \sum_{i=0}^\infty \Dm_{\frac{x^i}{i!}}\cdot f_i = \sum_{i=0}^\infty f_i(x) =f.
\]

\end{proof} 

\begin{corollary}\label{cor:gorfact}
For  a (possibly infinite dimensional) vector space $V$, factor algebras of $\Sym(V)$ with chosen Gorenstein duality are in bijection with formal polynomial series on $V$. 
\end{corollary}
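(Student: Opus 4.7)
The plan is to chain together the two main results already proved in the section. By Theorem~\ref{thm:gor}, specifying a factor algebra of $\Sym(V)$ together with a Gorenstein duality on it is the same data as specifying a linear function $\widetilde\ell\in\Sym(V)^\vee$: the algebra is recovered as $\Sym(V)/I_{\widetilde\ell}$ and the duality is induced by $\widetilde\ell$. Thus the set of (factor algebra, Gorenstein duality) pairs is canonically in bijection with $\Sym(V)^\vee$.

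Next I would invoke Theorem~\ref{prop:dual}, which exhibits a canonical isomorphism of vector spaces
\[
\Exp^*\colon \Sym(V)^\vee \;\xrightarrow{\;\sim\;}\; \kk\llbracket V \rrbracket,
\]
inverse to the pairing map $B\colon f\mapsto \langle\cdot,f\rangle$. Composing the bijection from Theorem~\ref{thm:gor} with this isomorphism gives the desired bijection between factor algebras of $\Sym(V)$ equipped with a Gorenstein duality and formal polynomial series on $V$.

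Concretely, I would spell out both directions of the correspondence: given a formal polynomial series $f\in\kk\llbracket V\rrbracket$, the associated linear function is $\widetilde\ell\colon P\mapsto (\Dm_P\cdot f)(0)$, and the associated factor algebra is $\Sym(V)/I_{\widetilde\ell}$ with pairing $\langle P,Q\rangle=(\Dm_{PQ}\cdot f)(0)$; conversely, given $(\Sym(V)/I,\langle\cdot,\cdot\rangle)$ with corresponding linear function $\widetilde\ell$, the formal polynomial series is $\Exp^*(\widetilde\ell)=\sum_{i\ge 0}\widetilde\ell\circ\Exp_i$, where $\Exp_i$ is the degree-$i$ component of $\Exp$. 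Since there is no substantive new content — the corollary is purely a juxtaposition of Theorem~\ref{thm:gor} and Theorem~\ref{prop:dual} — there is no real obstacle; the only thing worth noting is that the bijection of Theorem~\ref{thm:gor} records the duality together with the algebra, which matches the phrase ``factor algebras with chosen Gorenstein duality'' in the statement.
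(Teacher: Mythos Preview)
Your argument is correct and follows exactly the same route as the paper's own proof, which simply says the statement follows from Theorem~\ref{thm:gor} together with the isomorphism $\Sym(V)^\vee \simeq \kk\llbracket V \rrbracket$ established in Theorem~\ref{prop:dual}. You have merely spelled out the correspondence in more detail than the paper does.
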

\begin{proof}
Statement follows from Theorem~\ref{thm:gor} and the fact that $\Sym(V)^\vee \simeq \kk\llbracket V \rrbracket$.
\end{proof}

Another immediate corollary of Theorem~\ref{prop:dual} is the following statement.
\begin{corollary}\label{cor:exp}
For any linear function $\ell\in \Sym(V)^\vee$ we have
\[
\ell (P) = \langle\Dm_P, \Exp^*(\ell) \rangle,
\]
where $\Exp$ is the exponential formal polynomial series with values in $\Sym(V)$.
\end{corollary}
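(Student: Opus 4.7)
The plan is to derive the corollary as a direct unpacking of Theorem~\ref{prop:dual}. That theorem asserts both that the composition $\Exp^* \circ B$ equals the identity on $\kk\llbracket V \rrbracket$ and that $B$ and $\Exp^*$ are isomorphisms of vector spaces. Since a one-sided inverse of an isomorphism is automatically a two-sided inverse, I also have $B \circ \Exp^* = \mathrm{id}_{\Sym(V)^\vee}$. The corollary is nothing more than this identity rewritten pointwise.

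Concretely, I would proceed as follows. First, fix $\ell \in \Sym(V)^\vee$ and apply $B \circ \Exp^* = \mathrm{id}$ to obtain $B(\Exp^*(\ell)) = \ell$ as linear functionals on $\Sym(V)$. Second, evaluate both sides at an arbitrary $P \in \Sym(V)$ and use the definition of $B$ from Definition~\ref{def:pair}, namely $B(f)(\Dm_P) = \langle \Dm_P, f \rangle = (\Dm_P \cdot f)(0)$. Applied with $f = \Exp^*(\ell)$, this yields
\[
\ell(P) \;=\; B(\Exp^*(\ell))(\Dm_P) \;=\; \langle \Dm_P, \Exp^*(\ell)\rangle,
\]
which is exactly the claim.

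There is really no substantial obstacle: the content of the corollary is already contained in Theorem~\ref{prop:dual}, and only the direction $B \circ \Exp^* = \mathrm{id}$ has to be extracted from the statement that both maps are mutually inverse isomorphisms. If one wanted to avoid invoking bijectivity explicitly, an alternative route would be to verify the identity by hand: write $\Exp^*(\ell) = \sum_{i \geq 0} \ell\!\circ\!\Exp_i$ where $\Exp_i(v) = v^i/i!$, then note that $\Dm_P$ acts term-wise and, by Example~\ref{ex:polar}, only the term of matching degree survives evaluation at $0$, reproducing $\ell(P)$. Either path is short, so the corollary is essentially a bookkeeping statement attached to Theorem~\ref{prop:dual}.
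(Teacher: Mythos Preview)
Your proposal is correct and matches the paper's approach: the paper does not give a detailed proof but simply declares the statement to be an immediate corollary of Theorem~\ref{prop:dual}, which is exactly what you unpack by extracting $B\circ\Exp^*=\mathrm{id}_{\Sym(V)^\vee}$ and evaluating at $P$.
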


\section{Explicit description of Macaulay's inverse system}\label{sec:inverse}
In this subsection we obtain an explicit description of Macaulay's inverse system for algebras with Gorenstein duality over field of characteristic $0$. First we need the following lemma.

\begin{lemma}\label{lem:potential}
  Let $\widetilde \ell\in \Sym(V)^\vee$ be a linear function on $\Sym(V)$. Then the ideal $I_{\widetilde \ell}$ coincides with 
  \[
  \Ann(\Exp^*{\widetilde\ell}) = \{P\in \Sym(V)\,|\, \Dm_P\cdot \Exp^*{\widetilde\ell}=0 \}.
  \]
\end{lemma}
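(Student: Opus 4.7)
The plan is to chain together two applications of Corollary~\ref{cor:exp} with the fact that the map $P \mapsto \Dm_P$ from $\Sym(V)$ to differential operators on $\kk\llbracket V\rrbracket$ is a ring homomorphism, so in particular $\Dm_{PQ} = \Dm_P \Dm_Q$. This homomorphism property is essentially the definition of $\Dm$: on products of vectors the operator $\Dm_{v_1\cdots v_k}$ is $L_{v_1}\cdots L_{v_k}$, Gateaux derivatives commute, and the assignment is extended by linearity. I would quickly record this as the only structural ingredient needed beyond what was proved in Section~\ref{sec:poly}.

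Next, I would fix $P \in \Sym(V)$ and introduce the auxiliary formal polynomial series $F := \Dm_P \cdot \Exp^*\widetilde\ell \in \kk\llbracket V\rrbracket$. For an arbitrary $Q \in \Sym(V)$, applying Corollary~\ref{cor:exp} to the product $PQ$ and then using $\Dm_{PQ} = \Dm_Q \Dm_P$ yields
\[
\widetilde\ell(PQ) \;=\; \langle \Dm_{PQ},\, \Exp^*\widetilde\ell\rangle \;=\; \bigl(\Dm_Q \cdot (\Dm_P \cdot \Exp^*\widetilde\ell)\bigr)(0) \;=\; \langle \Dm_Q,\, F\rangle.
\]
Hence $P \in I_{\widetilde\ell}$ iff $\langle \Dm_Q, F\rangle = 0$ for every $Q \in \Sym(V)$, which is precisely the statement that $B(F) = 0$ as an element of $\Sym(V)^\vee$.

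To finish I would invoke Theorem~\ref{prop:dual}, which says $B\colon \kk\llbracket V\rrbracket \to \Sym(V)^\vee$ is an isomorphism; its injectivity forces $F = 0$, i.e.\ $\Dm_P \cdot \Exp^*\widetilde\ell = 0$, which is the defining condition for $P \in \Ann(\Exp^*\widetilde\ell)$. The reverse implication is immediate from the same computation, giving equality of the two ideals.

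The only point that takes a moment of thought is the innocent-looking identity $\Dm_{PQ} = \Dm_P \Dm_Q$, since it is what secretly converts the ``multiplication by $Q$'' inside $\widetilde\ell(P \cdot Q)$ into the ``differentiation by $\Dm_Q$'' that is compatible with the pairing $B$. Once this is in hand the argument is a two-line manipulation plus one appeal to the duality of Theorem~\ref{prop:dual}, so I do not expect any real obstacles.
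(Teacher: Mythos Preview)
Your proposal is correct and is essentially the same argument as the paper's: both reduce $P\in I_{\widetilde\ell}$ to the vanishing of $\langle \Dm_Q, F\rangle$ for all $Q$ with $F=\Dm_P\cdot\Exp^*\widetilde\ell$, using $\Dm_{PQ}=\Dm_Q\Dm_P$ and Corollary~\ref{cor:exp}. The only cosmetic difference is that you invoke the injectivity of $B$ via Theorem~\ref{prop:dual}, whereas the paper argues this step directly (if $F\neq 0$ some homogeneous component is nonzero, hence some $\langle\Dm_Q,F\rangle\neq 0$); these are the same observation.
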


\begin{proof}
If $P\in\Ann(\Exp^*{\widetilde\ell})$, then $\widetilde \ell (Q\cdot P)= \langle Q\cdot P,\Exp^*{\widetilde\ell}\rangle =0$, for any $Q\in \Sym(V)$. Hence $P\in I_{\widetilde\ell}$. 

For the other direction, notice that if $\Dm_P \cdot \Exp^*{\widetilde\ell} \ne 0$, then there exists at least one non-zero component of  $\Dm_P \cdot \Exp^*{\widetilde\ell}$, and therefore there exist $Q$ such that the constant term of $\Dm_{Q\cdot P}\cdot\Exp^*{\widetilde\ell}=\Dm_Q\cdot \Dm_p \cdot\Exp^*{\widetilde\ell}$ is non zero. Thus if $P\notin \Ann(\Exp^*{\widetilde\ell})$, we also have $P\notin I_{\widetilde \ell}$. \qedhere
\end{proof}

Now we are ready to give a construction of algebras with Gorenstein duality. As before, let $A$ be a commutative algebra, $\ell$ be a linear function on $A$ which induces a non-degenerate pairing $\langle\cdot,\cdot\rangle_\ell$. Let further $W\subset A$ be a vector subspace which generates $A$ multiplicatively and let $\pi:V\to W$ be a surjective linear mapping. The immediate corollary of Theorem~\ref{thm:gor} and Lemma~\ref{lem:potential} is the following theorem.

\begin{theorem}\label{thm:potential}
Let $A$ be a commutative algebra over $\kk$ with Gorenstein duality given by linear function $\ell:A\to \kk$. Let $V$ be a (possibly infinite dimensional) vector space and let $\pi:\Sym(V) \to A$ be surjection. Then we have
 \[
 A\simeq \Sym(V)/\Ann(\Exp^*{\widetilde\ell}),
 \]
 where $\Ann(\Exp^*{\widetilde\ell}) = \{P\in \Sym(V)\,|\, \Dm_P\cdot \Exp^*{\widetilde\ell}=0 \}$.
\end{theorem}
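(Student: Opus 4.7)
The plan is to observe that the statement is an almost immediate consequence of the two preceding results and to organize the argument in the natural three steps.

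First I would invoke surjectivity of $\pi$. Since $\pi\colon \Sym(V)\to A$ is a surjective algebra homomorphism, the first isomorphism theorem gives $A \simeq \Sym(V)/\ker(\pi)$, so the whole question reduces to identifying $\ker(\pi)$ with $\Ann(\Exp^*\widetilde\ell)$. This reduction is purely formal and carries no content; the real work is to show the two ideals coincide.

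Next I would apply Theorem~\ref{thm:gor} to rewrite the kernel in terms of the induced linear functional. That theorem asserts that
\[
\ker(\pi) = I_{\widetilde\ell} = \{P\in\Sym(V)\,|\,\widetilde\ell(P\cdot Q)=0 \text{ for all } Q\in\Sym(V)\},
\]
where $\widetilde\ell = \ell\circ\pi$. This step uses in an essential way that $\ell$ induces a \emph{non-degenerate} pairing on $A$: non-degeneracy is what forces the kernel to consist of exactly those elements annihilated under the pairing via multiplication.

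Finally I would apply Lemma~\ref{lem:potential} to identify $I_{\widetilde\ell}$ with $\Ann(\Exp^*\widetilde\ell)$, the annihilator of the formal polynomial series dual to $\widetilde\ell$ under the isomorphism $\Sym(V)^\vee \simeq \kk\llbracket V \rrbracket$ of Theorem~\ref{prop:dual}. Stringing the three identifications together,
\[
A \simeq \Sym(V)/\ker(\pi) = \Sym(V)/I_{\widetilde\ell} = \Sym(V)/\Ann(\Exp^*\widetilde\ell),
\]
which is the claim. The only nontrivial input has already been carried out in Lemma~\ref{lem:potential}; given the exponential dictionary between $\Sym(V)^\vee$ and $\kk\llbracket V\rrbracket$, there is no further obstacle, and the theorem is genuinely a corollary of Theorem~\ref{thm:gor} and Lemma~\ref{lem:potential}.
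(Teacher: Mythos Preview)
Your proposal is correct and follows exactly the paper's approach: the paper states the theorem as ``the immediate corollary of Theorem~\ref{thm:gor} and Lemma~\ref{lem:potential}'' and gives no further argument. Your three-step chain $A\simeq \Sym(V)/\ker(\pi)=\Sym(V)/I_{\widetilde\ell}=\Sym(V)/\Ann(\Exp^*\widetilde\ell)$ is precisely the intended reasoning.
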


For an algebra $A$ with duality given by linear function $\ell$ and a surjection $\pi:\Sym(V)\to A$, we will call the formal polynomial series $\Exp^*{\widetilde \ell}$ \emph{the potential} of $A$ with respect to the generating space $V$.

\subsection{Graded algebras with Gorenstein duality} In this subsection we consider important partial cases of algebras with Gorenstein duality whose potentials are actual polynomials. 

Let $A$ be a local algebra with maximal ideal $\m$ and the residue field $A/\m\simeq \kk$ with duality defined by a function $\ell$. Assume further that $\m^d = 0$ for some $d$. Let $W\subset \m$ be a subspace which generates $A$ multiplicatively and let $V\to W$ be a surjective map.

\begin{proposition}
 For $A$ and $V$ as above, the potential $\Exp^*{\widetilde\ell}$ of $A$ on $V$ has only finitely many non-zero sumands. In over words, $\Exp^*{\widetilde\ell}$ is a polynomial on $V$.
\end{proposition}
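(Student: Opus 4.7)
The plan is to unpack the definition of the potential explicitly and observe that the nilpotency $\m^d=0$ kills all homogeneous components in sufficiently large degree.

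First I would write down the $i$-th homogeneous component of $\Exp^*\widetilde\ell$ explicitly. Using the formula $\Exp(v)=\sum_{i\ge 0}v^i/i!$ for the exponential polynomial series $V\to\Sym(V)$, the component of $\Exp^*\widetilde\ell$ in degree $i$ is the homogeneous polynomial map $F_i\colon V\to\kk$ given by
\[
F_i(v)=\frac{1}{i!}\,\widetilde\ell(v^i)=\frac{1}{i!}\,\ell\bigl(\pi(v)^i\bigr),
\]
where $\pi\colon\Sym(V)\to A$ is the algebra homomorphism induced by $V\to W$ and $\widetilde\ell=\ell\circ\pi$.

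Next I would invoke the hypothesis. By construction $\pi$ sends $V$ into $W\subset\m$, so $\pi(v)\in\m$ for every $v\in V$, and consequently $\pi(v)^i\in\m^i$. For $i\ge d$ the assumption $\m^d=0$ forces $\pi(v)^i=0$ and hence $F_i\equiv 0$. Therefore
\[
\Exp^*\widetilde\ell=\sum_{i=0}^{d-1}F_i,
\]
which is an honest polynomial on $V$ of degree at most $d-1$.

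The argument is essentially a one-line unwinding of definitions, so there is no serious obstacle. The only tiny point worth flagging is that vanishing of each $F_i$ as a function on $V$ really does mean vanishing as a homogeneous polynomial map in the sense of Section~\ref{sec:poly}, but this is immediate since polynomial maps between vector spaces are, by definition, determined by their values on $V$.
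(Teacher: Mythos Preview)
Your argument is correct and matches the paper's own proof: both observe that $\pi(v)\in\m$ forces $\pi(v)^i=0$ for $i\ge d$, so the degree-$i$ component of $\Exp^*\widetilde\ell$ vanishes. The paper states this in a single sentence; you have merely unpacked it.
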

\begin{proof}
  Proposition immediately  follows from the fact that $\pi(x^i)=0$ for any $x\in V$ and $i\geq d$.
\end{proof}

The most important example of local algebras with Gorenstein duality for us are graded algebras. Let
\[
A= \bigoplus_{i=0}^n A_i,
\]
be a commutative graded algebra over $\kk$, such that 
\begin{itemize}
    \item $A_0 \simeq A_n \simeq \kk$;
    \item the natural bilinear pairing $A_i\times A_{n-i} \to A_n\simeq k$ is non-degenerate for any $i = 0, \ldots, n$.
\end{itemize}
Let us fix an isomorphism $\ell:A_n\simeq \kk$ and extend it to the whole $A$ trivially, i.e. $\ell(A_i)=0$, for $i\ne n$. Then a function $\ell$ provides a Gorenstein duality.

Let $W \subset A$ be a (possibly infinite dimensional) vector space which contains multiplicative generators of $A$. Then  $W$ has a natural structure of a graded vector space:
\[
W = \bigoplus_{i=0}^n W_i, \quad W_i = W\cap A_i.
\]
Let further $\pi:V\to \bigoplus_{i=1}^n W_i$ be a surjectiction to the subspace of $W$ of elements of positive degree. One can pullback the grading of $W$ to a grading of $V$:
\[
V = \bigoplus_{i=1}^n V_i, \quad V_i=\pi^{-1}(W_i).
\]

As before, the surjection $\pi:V\to \bigoplus_{i=1}^n W_i$ extends to a surjective homomorphism of algebras $\pi:\Sym(V) \to A$. Moreover, the grading on $V$ defines a grading on $\Sym(V)$ via
\begin{equation}\label{eq:grading}
    \deg \left(v_1^{k_1}\ldots v_n^{k_n}\right) = \sum_{i=1}^n ik_i,
\end{equation}
for any $v_i\in V_i$. This grading on $\Sym(V)$ makes the homomorphism $\pi\colon \Sym(V) \to A$ graded. Let 
\[
\Sym(V) = \bigoplus_{i=0}^\infty \Sym_i(V),
\]
be a decomposition into quasi-homogeneous components (i.e. homogeneous with respect to the grading from (\ref{eq:grading})). We will say that a linear function $\widetilde \ell:\Sym(V)\to \kk$ is $n$ quasi-homogeneous if
\[
\widetilde \ell(\Sym_i(V)) = 0, \text{ for } i\ne n.
\]

\begin{theorem}\label{thm:grgor}
 Let $V$ be a (possibly infinite dimensional) graded vector space. Then graded Gorenstein factor algebras of $\Sym(V)$ are in bijection with quasi-homogeneous linear functions on $\Sym (V)$ up to scaling.
\end{theorem}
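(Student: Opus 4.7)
The plan is to identify graded Gorenstein factor algebras of $\Sym(V)$ bijectively (up to rescaling of the duality) with $n$-quasi-homogeneous $\widetilde\ell \in \Sym(V)^\vee$ via the assignment $\widetilde\ell \mapsto \Sym(V)/I_{\widetilde\ell}$ supplied by Theorem~\ref{thm:gor}. The bulk of the argument is verifying that this assignment restricts to a bijection between the graded refinements on both sides.

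First, I would check that if $\widetilde\ell$ is nonzero and $n$-quasi-homogeneous, then $I_{\widetilde\ell}$ is a homogeneous ideal with respect to the grading~(\ref{eq:grading}) and the quotient satisfies the two bulleted conditions preceding Theorem~\ref{thm:grgor}. The key observation: writing $P = \sum_i P_i$ with $P_i \in \Sym_i(V)$, the $n$-quasi-homogeneity of $\widetilde\ell$ collapses $\widetilde\ell(P \cdot Q_j) = \widetilde\ell(P_{n-j} Q_j)$ for every $Q_j \in \Sym_j(V)$. Thus $P \in I_{\widetilde\ell}$ forces $\widetilde\ell(P_{n-j} Q_j) = 0$ for every $j$ and every $Q_j$, so each $P_i$ lies in $I_{\widetilde\ell}$, proving homogeneity. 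The same computation shows that the induced pairing $A_i \times A_{n-i} \to \kk$ on the quotient $A = \Sym(V)/I_{\widetilde\ell}$ is non-degenerate; that $A_i = 0$ for $i > n$ (any $P_i$ with $i > n$ satisfies $\widetilde\ell(P_i Q) = 0$ trivially, since no $Q$ of degree $n-i < 0$ exists); and that the nonzero restriction $\widetilde\ell|_{\Sym_n(V)}$ descends to an isomorphism $A_n \simeq \kk$, with $A_0 \simeq A_n^\vee \simeq \kk$ via the pairing.

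For the converse direction, starting from a graded Gorenstein factor algebra $A = \Sym(V)/I$ of top degree $n$, I would choose the isomorphism $A_n \simeq \kk$ and extend it by zero on all other graded pieces to obtain a linear function $\ell\colon A \to \kk$. The pullback $\widetilde\ell = \ell \circ \pi$ is then automatically $n$-quasi-homogeneous by construction, and Theorem~\ref{thm:gor} recovers $I = I_{\widetilde\ell}$.

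Finally, for the "up to scaling" claim, any nonzero $n$-quasi-homogeneous functional is determined entirely by its restriction to $\Sym_n(V)$, and two such restrictions produce the same ideal iff they descend to proportional maps on the one-dimensional quotient $A_n \simeq \kk$, i.e.\ iff they differ by a scalar in $\kk^\times$. I do not anticipate a serious obstacle: the main subtlety is running the homogeneity argument in the possibly infinite-dimensional setting, but since any given element of $\Sym(V)$ involves only finitely many variables from $V$, the argument reduces to the standard finite-dimensional bookkeeping.
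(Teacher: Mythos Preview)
Your proposal is correct and follows essentially the same approach as the paper: both directions hinge on Theorem~\ref{thm:gor}, and the key step---showing that $I_{\widetilde\ell}$ is quasi-homogeneous when $\widetilde\ell$ is---is proved by the same collapsing argument (only the degree-$(n-i)$ component of $Q$ pairs nontrivially with $P_i$). You are in fact slightly more thorough than the paper, which stops after homogeneity of $I_{\widetilde\ell}$ and leaves the verification of the two bulleted conditions and the ``up to scaling'' claim implicit.
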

\begin{proof}
  Let $\pi:\Sym(V)\to A$ be a surjection to a graded algebra $A=\bigoplus_{i=0}^n A_i$ with duality and let $\ell:A_n\to \kk$, be some isomorphism (which is unique up to scaling). Then $\widetilde \ell = \ell\circ\pi$ is an $n$ quasi-homogeneous linear function, and by Theorem~\ref{thm:gor}, we have
  \[
   A \simeq  \Sym(V)/I_{\widetilde \ell}.
  \]
  
  For the other direction let $\widetilde \ell$ be an $n$ quasi-homogeneous linear function on $\Sym(V)$. We will show that the ideal $I_{\widetilde\ell}$ is quasi-homogeneous and hence the factor algebra $\Sym(V)/I_{\widetilde\ell}$ is a graded Gorenstein algebra.
  
  Let $P\in I_{\widetilde \ell}$, i.e. $\widetilde \ell(P\cdot Q)$ for any $Q\in \Sym(V)$. Let  $P=\sum_{i=0}^k P_i$ be a decomposition of $P$ into quasi homogeneous components. Then for any $Q\in \Sym(V)$, we have
  \[
    \widetilde\ell(P_i\cdot Q) = \sum_j \widetilde\ell(P_i\cdot Q_j) = \widetilde\ell(P_i\cdot Q_{n-i}),
  \]
  where $Q=\sum_j Q_j$ is the decomposition into quasi-homogeneous components. But since $P\in I_{\widetilde\ell}$ and $\widetilde \ell$ is $n$ quasi-homogeneous we have
  \[
   0= \widetilde\ell(P\cdot Q_{n-i}) = \widetilde\ell(P_i\cdot Q_{n-i}).
  \]
  Thus $\widetilde\ell(P_i\cdot Q)=0$ for any $Q\in \Sym(V)$ and the ideal $I_{\widetilde\ell}$ is quasi-homogeneous.
\end{proof}

Now we will give a second version of Theorem~\ref{thm:grgor} in terms of potentials. First we will need the following definition.

\begin{definition}
 We will call a function $f\colon V\to \kk$ on a graded vector space a quasi-homogeneous polynomial of degree $d$ if its restriction to any finite dimensional vector subspace $W$ of $V$ which decomposes into graded components is a quasi-homogeneous polynomial of degree $d$.
\end{definition}

In case when $V=V_1$ is a graded vector space with trivial graded structure, quasi homogeneous polynomials on $V$ are just homogeneous polynomials. In general we would need the following technical lemma.

\begin{lemma}\label{lem:quasiispoly}
Let $V$ be a graded vector space, and let $f\colon V\to \kk$ be quasi-homogeneous polynomial. Then $f$ is a polynomial.
\end{lemma}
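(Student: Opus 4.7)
The plan is to first show that $f|_W$ is a polynomial (in the usual finite-dimensional sense) for \emph{every} finite-dimensional subspace $W\subset V$, not only for graded ones, and then to assemble the homogeneous components globally using a dilation trick.

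For the first step, given a finite-dimensional $W\subset V$ with basis $w_1,\ldots,w_k$, each $w_i$ has a finite decomposition $w_i=\sum_j w_i^{(j)}$ into homogeneous components, since $V=\bigoplus_i V_i$. Let $W'$ be the subspace spanned by all the vectors $w_i^{(j)}$. Then $W'$ is finite-dimensional, decomposes into graded components, and contains $W$. By hypothesis the restriction $f|_{W'}$ is a quasi-homogeneous polynomial of degree $d$ in the ordinary sense, hence a polynomial; restricting further to $W$ yields a polynomial on $W$. Moreover, since the weights on $V$ are positive integers, any monomial on $W'$ of weighted degree $d$ has ordinary degree at most $d$, so the ordinary degree of $f|_W$ is also bounded by $d$.

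To extract the homogeneous components on all of $V$, I would use the dilation trick. For each $v\in V$, the map $t\mapsto f(tv)$ is the restriction of $f$ to the one-dimensional subspace $\kk v$, which sits inside the finite-dimensional graded hull constructed above. Hence $t\mapsto f(tv)$ is a polynomial in $t$ of degree at most $d$, and I define $f_e\colon V\to\kk$ by letting $f_e(v)$ be the coefficient of $t^e$ in this polynomial. Evaluating at $t=1$ gives $f(v)=\sum_{e=0}^d f_e(v)$.

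It remains to check that each $f_e$ is a homogeneous polynomial of degree $e$ in the sense of Definition~3.1. For any finite-dimensional $W\subset V$, the polynomial $f|_W$ admits the unique decomposition into ordinary homogeneous components characterized by $f|_W(tv)=\sum_e t^e(f|_W)_e(v)$; comparing with the definition of $f_e$ shows $f_e|_W=(f|_W)_e$, which is a homogeneous polynomial of degree $e$ on $W$. Therefore $f_e$ is a homogeneous polynomial of degree $e$ on $V$, and $f=f_0+\cdots+f_d$ exhibits $f$ as a polynomial. The only real obstacle is the first step — bridging from graded to arbitrary finite-dimensional subspaces — and the graded hull construction handles it in one line; everything afterwards is standard ordinary-polynomial bookkeeping.
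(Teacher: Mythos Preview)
Your proof is correct and its core idea is exactly the paper's: embed an arbitrary finite-dimensional $W$ into a finite-dimensional graded hull $W'$ and restrict. The paper stops there, asserting that ``it is enough to show that for every finite dimensional $W\subset V$ the restriction of $f$ to $W$ is a polynomial,'' whereas you spell out the missing global step---using the dilation $t\mapsto f(tv)$ and the uniform degree bound to extract homogeneous components $f_e$ that patch together---so your argument is the same route, just more complete.
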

\begin{proof}
  It is enough to show that for every finite dimensional $W\subset V$ the restriction of $f$ to $W$ is a polynomial. But for every such $W$ one can construct a finite dimensional decomposable $\widetilde W \subset V$ with $W\subset \widetilde W$.  Since the restriction of $f$ to $\widetilde W$ is a polynomial, such is its restriction to $W$.
\end{proof}

\begin{lemma}\label{lem:qhpoly}
  Let $\widetilde\ell$ be an $n$ quasi-homogeneous linear function. Then the formal polynomial series $\Exp^*{\widetilde\ell}$ is a quasi-homogeneous polynomial of degree $n$.
\end{lemma}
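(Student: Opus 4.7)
The plan is to evaluate $\Exp^*\widetilde\ell$ explicitly, exploit the $n$-quasi-homogeneity of $\widetilde\ell$ to truncate its expansion, and then verify the transformation law under the grading dilation. By the definition of $\Exp^*$ in Theorem~\ref{prop:dual}, the $i$-th homogeneous (in the usual sense) component of $\Exp^*\widetilde\ell$ is the polynomial $f_i(v) = \tfrac{1}{i!}\widetilde\ell(v^i)$ on $V$, so $\Exp^*\widetilde\ell$ is the formal sum $\sum_{i\geq 0} f_i$.

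Next, I would fix an arbitrary finite-dimensional decomposable subspace $W = \bigoplus_{j=1}^n W_j \subset V$ and write $v = v_1+\cdots+v_n$ with $v_j \in W_j$. The multinomial expansion in $\Sym(V)$ gives
\[
v^i = \sum_{k_1+\cdots+k_n=i} \binom{i}{k_1,\ldots,k_n}\, v_1^{k_1}\cdots v_n^{k_n},
\]
and by the grading (\ref{eq:grading}) the monomial $v_1^{k_1}\cdots v_n^{k_n}$ lies in the quasi-homogeneous component $\Sym_{\sum jk_j}(V)$. Since $\widetilde\ell$ vanishes on $\Sym_m(V)$ for $m\neq n$, only the indices with $\sum_j jk_j = n$ can contribute to $\widetilde\ell(v^i)$. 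Because every $j\geq 1$, any such contributing index satisfies $i = \sum k_j \leq \sum jk_j = n$, so $f_i \equiv 0$ for $i>n$ and the formal series $\Exp^*\widetilde\ell$ reduces to a finite sum of homogeneous polynomials; in particular it is a polynomial.

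Finally, to verify quasi-homogeneity of degree $n$, I would apply the scaling test. Replacing $v_j$ by $t^j v_j$ for $t\in\kk^\times$, each surviving monomial $v_1^{k_1}\cdots v_n^{k_n}$ picks up the factor $t^{\sum jk_j} = t^n$, hence $\widetilde\ell\bigl((t\cdot v)^i\bigr) = t^n\widetilde\ell(v^i)$, and summing over $i$ yields $(\Exp^*\widetilde\ell)(t\cdot v) = t^n(\Exp^*\widetilde\ell)(v)$ on $W$. As $W$ was an arbitrary finite-dimensional decomposable subspace, this is exactly the definition of a quasi-homogeneous polynomial of degree $n$ on $V$. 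I do not expect any serious obstacle: the argument is essentially bookkeeping, and the only substantive point is the inequality $i\leq n$ forced by $j\geq 1$, which is what converts an a priori formal polynomial series into a genuine polynomial of the claimed weight.
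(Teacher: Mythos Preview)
Your argument is correct and follows essentially the same route as the paper: restrict to a finite-dimensional decomposable subspace, expand $v^i$ multinomially, and use the $n$-quasi-homogeneity of $\widetilde\ell$ to kill all terms except those with $\sum jk_j = n$. You are simply more explicit than the paper about why the series truncates (the inequality $i=\sum k_j\le\sum jk_j=n$) and you verify quasi-homogeneity via the scaling test rather than just reading it off the surviving monomials, but these are elaborations of the same idea rather than a different approach.
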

\begin{proof}
  Let $V'= \bigoplus_{i=1}^n V_i'$ be a finite dimensional subspace of $V$. Then for an element $v=\sum v_i$ with $v_i\in V_i'$ we have
  \[
  \Exp^*{\widetilde\ell}(v) = \sum_{j=0}^\infty \widetilde\ell\left(\frac{(v_1+\ldots+v_n)^j}{j!}\right).
  \]
  But since $\widetilde \ell$ is an $n$ quasi-homogeneous linear function we have
  \[
   \Exp^*{\widetilde\ell}(v) = \sum_{\sum ik_i = n} \widetilde\ell\left(\frac{v_1^{k_1}\ldots v_n^{k_n}}{k_1!\ldots k_n!}\right),
  \]
  and in particular $\Exp^*{\widetilde\ell}$ is a finite sum of quasi-homogeneous polynomials of degree $n$.
\end{proof}

Now we are ready to formulate the main result of this subsection.

\begin{theorem}
Let $V$ be a (possibly infinite dimensional) graded vector space. Then graded factor algebras of $\Sym(V)$ with Gorenstein duality are in bijection with quasi-homogeneous polynomials on $V$. For a quasi-homogeneous polynomial $f$, the corresponding algebra is given by
 \[
 A = \Sym(V)/\Ann(f),
 \]
Where $\Ann(f)= \{P\in \Sym(V) \,|\, \Dm_P(f)=0\}$.
\end{theorem}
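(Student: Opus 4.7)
The plan is to combine Theorem~\ref{thm:grgor} with the explicit identification $\Exp^*\colon \Sym(V)^\vee \simeq \kk\llbracket V \rrbracket$ from Theorem~\ref{prop:dual} and the quotient formula of Theorem~\ref{thm:potential}. The essential content of the argument is to upgrade Lemma~\ref{lem:qhpoly} to a genuine bijection: $\Exp^*$ sends $n$-quasi-homogeneous linear functions on $\Sym(V)$ bijectively onto quasi-homogeneous polynomials of degree $n$ on $V$.

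First I would note that Theorem~\ref{thm:grgor}, once the duality $\ell$ is treated as part of the data (rather than only the underlying algebra), gives a bijection between graded factor algebras of $\Sym(V)$ with Gorenstein duality and $n$-quasi-homogeneous linear functions $\widetilde\ell$ on $\Sym(V)$, with $n$ ranging over nonnegative integers; the scaling ambiguity in Theorem~\ref{thm:grgor} is precisely the freedom to rescale $\ell$ and therefore disappears when $\ell$ itself is fixed.

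Next I would restrict the isomorphism $\Exp^*$ to this class. One direction is exactly Lemma~\ref{lem:qhpoly}. For the converse, given a quasi-homogeneous polynomial $f$ of degree $n$, its unique preimage under $\Exp^*$ is $B(f) = \langle\cdot,f\rangle$ from Definition~\ref{def:pair}, so it suffices to show that $B(f)$ is $n$-quasi-homogeneous. Unwinding the definition, $B(f)(P) = (\Dm_P \cdot f)(0)$; I would therefore verify that for $P \in \Sym_m(V)$ the polynomial $\Dm_P \cdot f$ is quasi-homogeneous of degree $n-m$ (and hence its constant term vanishes unless $m = n$). This reduces, after restriction to a decomposable finite-dimensional subspace of $V$ containing the vectors involved in $P$, to the classical fact that Gateaux differentiation along $V_k$ drops quasi-degree by $k$.

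With the bijection in hand, Theorem~\ref{thm:potential} immediately yields $A \simeq \Sym(V)/\Ann(\Exp^*\widetilde\ell) = \Sym(V)/\Ann(f)$, which completes the proof. I expect the only step with real content to be the converse direction in the third paragraph; the bookkeeping for how $\Dm_P$ shifts quasi-degree is where one actually needs to do work, but it reduces to a direct calculation on a decomposable finite-dimensional subspace, and everything else assembles cleanly from the earlier results.
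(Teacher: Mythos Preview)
Your proposal is correct and follows essentially the same route as the paper, which simply cites Theorem~\ref{thm:grgor} together with Lemmas~\ref{lem:potential} and~\ref{lem:qhpoly}. In fact you are more careful than the paper: you explicitly verify the converse of Lemma~\ref{lem:qhpoly} (that $B(f)$ is $n$-quasi-homogeneous when $f$ is), which is needed for the bijection but is left implicit in the paper's one-line proof.
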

\begin{proof}
  Theorem follows immediately from Theorem~\ref{thm:grgor} and Lemmas~\ref{lem:potential},~\ref{lem:qhpoly}.
\end{proof}

\begin{example}\label{ex:deg1}
  Let algebra $A=\bigoplus_{i=0}^n A_i$ be as before. Assume further that $A$ is generated in degree 1, and $V\to A_1$ is a surjective map. Then the potential $\Exp^*{\widetilde \ell}$ of $A$ on $V$ is given by
 \[
 \Exp^*{\widetilde \ell}(v) = \widetilde \ell\left(\frac{v^n}{n!}\right).
 \]
 In particular algebra $A$ can be constructed as
 \[
 A\simeq \Diff(A)/\Ann\left(\widetilde \ell\left(\frac{v^n}{n!}\right)\right).
 \]
\end{example}

The result from Example~\ref{ex:deg1} was observed in \cite{KP} where it was applied to the computation of cohomology ring of toric varieties. It was also used in \cite{KavehVolume} for the computation of the cohomology rings of full flag varieties $G/B$ and in \cite{KaveKhovanskii} for the computation of the ring of complete intersections of an algebraic variety. These results crucially use that the above rings are generated in degree 1 (after dividing grading by $2$ for cohomology rings).

\section{Potential of a toric bundle}\label{sec:toric}
In this section we will apply our explicit version of Macaulay's inverse system to the computation of the ring of even degree cohomology classes of toric bundles. In particular, we obtain complete description of  cohomology rings of toroidal horospherical varieties and the ring of conditions of horospherical homogeneous spaces.

\subsection{Virtual polytopes and cohomology ring of a toric variety}
In this subsection we briefly recall the relation of the space of virtual polytopes  to the cohomology rings of toric varieties. For more detailed background on virtual polytopes we refer to \cite{panina}. We also refer to \cite{tor-var} for the introduction to toric geometry.

Let $T\simeq(\C^*)^n$ be an algebraic torus with a character lattice $M$ and a cocharacter lattice $N:=\Hom(M,\Z)$. Let $M_\R:= M\otimes \R$ and $N_\R:=N\otimes \R$ be the corresponding vector spaces. 

Let us denote by $\Pm^+$ the set of all convex polytopes in $M_\R$. Note that $\Pm^+$ is a cone with respect the Minkowski addition and scaling by a positive factor. Moreover, $\Pm^+$ has the cancellation property, i.e.
\[
\Delta_1 +\Delta = \Delta_2 +\Delta \quad \text{if and only if} \quad \Delta_1 = \Delta_2.
\]
Thus we can consider a Grothendieck group associated to $\Pm^+$.

\begin{definition}
 The space of virtual polytopes $\Pm$ is the Grothendieck group of the semigroup of convex polytopes $\Pm^+$. 
\end{definition}

Let us fix a complete fan $\Sigma\subset N$ which is a normal fan of some polytope. Let further $\Pm_\Sigma^+$ be the set of polytopes in $M_\R$ such that $\Sigma$ refines their normal fans. It is easy to see that $\Pm_\Sigma^+$ is a subsemigroup of $\Pm_\Sigma$. Similar to the general case, the space of virtual polytopes $\Pm_\Sigma$ associated with fan $\Sigma$ is the Grothendieck group of the semigroup of convex polytopes $\Pm_\Sigma^+$.

The scaling by positive real numbers of convex polytopes $\Pm_\Sigma^+$ extends to the action of $\R$ on $\Pm_\Sigma$, making it into a vector space. In what follows we will use a more concrete description of $\Pm_\Sigma$. Using support functions, one can identify $\Pm_\Sigma^+$ with a cone of convex, piecewise linear function on $N_\R$ which are linear on all cones of $\Sigma$. This identification extends to the space of virtual polytopes, and since any piecewise linear function is a difference of two convex piecewise linear functions, we have
\[
\Pm_\Sigma = \{ h \colon N_\R \to \R \colon \text{piecewise linear function on} \; \Sigma \}.
\]
In particular, the description above allows to work with the spaces of virtual polytopes associated to fans $\Sigma$ which are not normal for any polytope.

Similarly, the space of all virtual polytopes  $\Pm$  can be identified with all functions $h \colon N_\R \to \R$ which are piecewise linear with respect to some fan $\Sigma$. For a virtual polytope $\Delta\in\Pm$, we will call the corresponding piecewise linear function $h_\Delta \colon N_\R \to \R$ \emph{the support function} of $\Delta$.

For an integral fan $\Sigma\subset N_\R$, let us denote by $Y_\Sigma$ the corresponding toric variety. We will say that $\Sigma$ is a smooth projective fan if the corresponding toric variety is such.

Each virtual polytope $\Delta\in \Pm_\Sigma$ defines a degree two cohomology class on a toric variety $Y_\Sigma$ in the following way. Let $e_1,\ldots, e_r$ be the integral ray generators of $\Sigma$ and $D_1,\ldots, D_r$ be the corresponding $T$-invariant divisors on $Y_\Sigma$. Let further $h_\Delta:N_\R\to \R$ be the support function of a (virtual) polytope $\Delta$. Then we have a map
\[
\Pm_\Sigma \to H^2(Y_\Sigma,\R), \quad \Delta \mapsto \sum h_{\Delta}(e_i) [D_i],
\]
where $[D_i]\in H^2(Y_\Sigma)$ is a class Poincar\'e dual to the homology class of the divisor $D_i$.

The following theorem is well-known (see, for example, \cite{tor-var}).

\begin{theorem}\label{thm:torcoh}
 The map $\Pm_\Sigma \to H^2(Y_\Sigma,\R)$ is surjective. Moreover, $H^*(Y_\Sigma,\R)$ is generated in degree 2, thus one has a surjective ring homomorphism:
 \[
 \Sym(\Pm_\Sigma) \to H^*(Y_\Sigma,\R).
 \]
\end{theorem}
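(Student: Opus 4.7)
The plan is to reduce the statement to the classical Jurkiewicz--Danilov presentation of the cohomology ring of a smooth complete toric variety. Recall that this theorem says that for a smooth complete fan $\Sigma$ with ray generators $e_1,\ldots,e_r$ and corresponding $T$-invariant divisors $D_1,\ldots,D_r$, the cohomology ring $H^*(Y_\Sigma,\R)$ is generated as an $\R$-algebra by the classes $[D_1],\ldots,[D_r]$ in degree $2$, with relations given by the Stanley--Reisner ideal (products $[D_{i_1}]\cdots[D_{i_k}]=0$ when $\{e_{i_1},\ldots,e_{i_k}\}$ does not span a cone of $\Sigma$) together with the linear relations $\sum_i \langle m,e_i\rangle [D_i]=0$ for $m\in M$. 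In particular, the $[D_i]$ already span $H^2(Y_\Sigma,\R)$ and the ring is generated in degree $2$.

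To prove surjectivity of $\Pm_\Sigma\to H^2(Y_\Sigma,\R)$, I would show that \emph{every} assignment of real values to the ray generators $e_1,\ldots,e_r$ can be realized as $h_\Delta(e_i)$ for some virtual polytope $\Delta\in\Pm_\Sigma$. Since $Y_\Sigma$ is smooth, the fan $\Sigma$ is simplicial, so for any function $\varphi\colon\{e_1,\ldots,e_r\}\to\R$ one extends it to each maximal cone $\sigma\in\Sigma$ by the unique $\R$-linear function on $\sigma$ taking the prescribed values on the rays of $\sigma$; smoothness ensures the extensions on adjacent maximal cones agree on the common face, giving a globally piecewise linear function $h\colon N_\R\to\R$, hence an element $\Delta\in\Pm_\Sigma$ with $h_\Delta(e_i)=\varphi(e_i)$. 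Combined with the fact that the $[D_i]$ span $H^2(Y_\Sigma,\R)$, this gives surjectivity in degree $2$.

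For the second statement, the linear map $\Pm_\Sigma\to H^2(Y_\Sigma,\R)$ extends by the universal property of $\Sym$ to a ring homomorphism $\Sym(\Pm_\Sigma)\to H^*(Y_\Sigma,\R)$. Since $H^*(Y_\Sigma,\R)$ is generated in degree $2$ by Jurkiewicz--Danilov, and every degree-$2$ class lies in the image by the previous paragraph, this ring homomorphism is surjective.

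The main obstacle is not a conceptual difficulty but a bookkeeping one: one needs to be careful about what "smooth projective fan" means and use simpliciality at the precise point where piecewise-linear extensions from rays to the whole of $N_\R$ are constructed. If one only assumed $\Sigma$ complete (without smoothness), both ingredients would fail: the Jurkiewicz--Danilov style presentation would only hold rationally for simplicial fans, and values on rays might not extend to a compatible piecewise-linear function on non-simplicial cones. Throughout I am treating Theorem~\ref{thm:torcoh} as a statement about the smooth projective case, which is consistent with the paper's prior convention that $Y_\Sigma$ be a smooth orientable manifold for the even-degree cohomology ring $A^*(Y_\Sigma)$ to carry Poincar\'e (Gorenstein) duality.
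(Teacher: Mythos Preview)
Your argument is correct and is essentially the standard proof. The paper itself does not give a proof of Theorem~\ref{thm:torcoh}; it states the result as well known and simply refers to \cite{tor-var}. What you have written is precisely the argument one finds in such a reference: the Jurkiewicz--Danilov presentation shows that $H^*(Y_\Sigma,\R)$ is generated by the divisor classes $[D_i]$ in degree~$2$, and the identification of $\Pm_\Sigma$ with $\R^r$ via $h\mapsto(h(e_1),\ldots,h(e_r))$, valid for simplicial~$\Sigma$, gives surjectivity onto $H^2$. One small remark: compatibility of the piecewise-linear extension on common faces needs only simpliciality, not smoothness; smoothness is what lets you invoke Jurkiewicz--Danilov in the form you stated.
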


\subsection{Preliminaries on toric bundles} In this subsection we give preliminaries on toric bundles, for more detailed treatment see Section 3 of \cite{hof2020}.

Let $T$ be an algebraic torus as before and let $E\to X$ be a $T$-principle bundle over smooth closed orientable manifold. For any manifold $Y$ with a continuous action by $T$ let $E \times_T Y \coloneqq (E \times Y) / T$ be the associated fiber bundle over $X$. It is easy to see that a fiber of the bundle $E\times_T Y \to X$ is homeomorphic to~$Y$.

For any $\lambda\in Y$, let  $\C_\lambda$ be the corresponding 1-dimensional representation of $T$ and let $\cL_\lambda :=E \times_T \C_\lambda$ be the associated complex line bundle over $X$. One can see that
\[
\cL_{\lambda+\mu}= \cL_\lambda\otimes \cL_\mu, \text{ for any } \lambda, \mu \in M.
\]
Thus we have a homomorphism $c\colon M \to H^2(X,\Z)$ of abelian groups defined via 
\[
\lambda \mapsto c_1(\cL_\lambda),
\]
where $c_1(\cL_\lambda)$ is the first Chern class of the line bundle $\cL_\lambda$. We extend the group homomorphism $c$ to a linear map
\[
c\colon M_\R \to H^2(X,\R).
\]

Let $\Sigma \subset N$ be a smooth, projective fan and let be $Y_\Sigma$ the corresponding toric variety. We denote by $E_\Sigma \to X$ the associated \emph{toric bundle}:
\[
E_\Sigma = E\times_T Y_\Sigma.
\]
The following result describes the cohomology groups of a toric bundle. It is an easy corollary of Leray-Hirsch theorem (\cite[Theorem 5.11]{BottTu}).
\begin{proposition}\label{prop:LH}
Let $T$ be an algebraic torus, $p \colon E \to X$ be a $T$-principle bundle, and $Y_\Sigma$ be a smooth projective $T$--toric variety given by a fan $\Sigma$. Then as a group the cohomology of $E_\Sigma = E \times_T Y_\Sigma$ is given by
  \begin{equation}\label{eq:lereyhirsch}
    H^*(E_\Sigma, \R) \simeq H^*(X, \R) \otimes H^*(Y_\Sigma, \R) \text{.}      
  \end{equation}
\end{proposition}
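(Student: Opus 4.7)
The plan is to apply the Leray--Hirsch theorem to the fiber bundle $E_\Sigma \to X$ with fiber $Y_\Sigma$. The two hypotheses to verify are that $H^*(Y_\Sigma,\R)$ is a finitely generated free $\R$-module, and that there exist cohomology classes on the total space $E_\Sigma$ whose restrictions to each fiber form an $\R$-basis of $H^*(Y_\Sigma,\R)$. The first hypothesis is immediate since $Y_\Sigma$ is a smooth projective toric variety, so its cohomology is finite-dimensional over $\R$.

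For the second hypothesis, I would use Theorem~\ref{thm:torcoh}: the ring $H^*(Y_\Sigma,\R)$ is generated in degree $2$ by the classes $[D_i]$ of the $T$-invariant prime divisors $D_i$ corresponding to the rays $e_i \in \Sigma$. In particular, an $\R$-basis of $H^*(Y_\Sigma,\R)$ can be chosen to consist of monomials in these generators. My strategy is therefore to globalize each $[D_i]$ to a class on $E_\Sigma$ and take products of these global classes. Concretely, each divisor $D_i \subset Y_\Sigma$ is $T$-invariant, so the line bundle $\Om_{Y_\Sigma}(D_i)$ carries a canonical $T$-equivariant structure. The associated bundle construction then produces a complex line bundle
\[
E \times_T \Om_{Y_\Sigma}(D_i) \longrightarrow E_\Sigma
\]
whose restriction to every fiber is $\Om_{Y_\Sigma}(D_i)$. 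Its first Chern class $\widetilde{[D_i]} \in H^2(E_\Sigma,\R)$ thus restricts to $[D_i]$ on each fiber.

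Taking products of the $\widetilde{[D_i]}$ in $H^*(E_\Sigma,\R)$ yields lifts of all monomials in the $[D_i]$, and in particular lifts of a chosen monomial basis of $H^*(Y_\Sigma,\R)$. These lifts therefore restrict fiberwise to a basis. Leray--Hirsch now applies and gives the desired isomorphism of $H^*(X,\R)$-modules (and in particular of abelian groups):
\[
H^*(E_\Sigma,\R) \simeq H^*(X,\R)\otimes H^*(Y_\Sigma,\R).
\]

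The only mildly delicate point is checking that the $T$-equivariant structure on $\Om_{Y_\Sigma}(D_i)$ makes the associated-bundle construction functorial enough so that the restriction to a typical fiber really recovers $\Om_{Y_\Sigma}(D_i)$ and hence its Chern class recovers $[D_i]$. This is routine for associated bundles of $T$-equivariant line bundles, but it is the one step where one must be careful about conventions; everything else in the argument is a direct application of Leray--Hirsch together with the ring-theoretic structure of $H^*(Y_\Sigma,\R)$ provided by Theorem~\ref{thm:torcoh}.
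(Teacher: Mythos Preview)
Your proposal is correct and follows exactly the route the paper indicates: the paper does not give a proof of this proposition at all, merely stating that it ``is an easy corollary of Leray--Hirsch theorem'' with a reference to Bott--Tu. Your argument supplies the details the paper omits---verifying the Leray--Hirsch hypotheses by globalizing the $T$-invariant divisor classes via associated equivariant line bundles---and is the standard way to carry this out.
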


As before, to each virtual polytope $\Delta\in\Pm_\Sigma$ we can associate a degree 2 cohomology in $H^2(Y_\Sigma,\R)$. thus, using isomorphism~(\ref{eq:lereyhirsch}), to each virtual polytope $\Delta \in \Pm_\Sigma$ we can associate a degree 2 cohomology class on the corresponding toric bundle $E_\Sigma$:
\[
\rho\colon\Pm_\Sigma \to H^2(E_\Sigma),\quad \Delta \mapsto \sum 1\otimes h_{\Delta}(e_i) [D_i] \in H^*(X)\otimes H^*(Y_\Sigma)\simeq  H^*(E_\Sigma).
\]

Moreover we have the following proposition
\begin{proposition}\label{prop:cohgen}
The cohomology ring of a toric bundle $p:E_\Sigma \to X$ is generated by the image of $H^*(X,\R) \oplus \Pm_\Sigma$ under 
\[
p^*\oplus \rho\colon H^*(X,\R) \oplus \Pm_\Sigma \to H^*(E_\Sigma,\R).
\]
\end{proposition}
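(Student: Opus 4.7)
The plan is to deduce ring generation of $H^*(E_\Sigma,\R)$ from two ingredients already on the table: the Leray--Hirsch decomposition of Proposition~\ref{prop:LH}, which says $H^*(E_\Sigma,\R)\simeq H^*(X,\R)\otimes H^*(Y_\Sigma,\R)$ as an $H^*(X,\R)$-module, and Theorem~\ref{thm:torcoh}, which says that $H^*(Y_\Sigma,\R)$ is generated as a ring in degree $2$ by the classes coming from virtual polytopes in $\Pm_\Sigma$. My strategy is to lift a ring-generating set of the fiber cohomology to $E_\Sigma$ via $\rho$ and then to show it combines with $p^*H^*(X,\R)$ to generate $H^*(E_\Sigma,\R)$ as a ring.

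The central observation is that by construction $\rho(\Delta)$ corresponds to $1\otimes \sum h_\Delta(e_i)[D_i]$ under the Leray--Hirsch isomorphism, so restriction along the inclusion $i_x\colon Y_\Sigma\hookrightarrow E_\Sigma$ of any fiber sends $\rho(\Delta)$ to the associated toric class $\sum h_\Delta(e_i)[D_i]\in H^*(Y_\Sigma,\R)$. Using Theorem~\ref{thm:torcoh} I would choose a finite collection of polynomial expressions $\gamma_1,\ldots,\gamma_N$ in such toric classes that form a basis of $H^*(Y_\Sigma,\R)$. Replacing each toric class by the corresponding $\rho(\Delta)$ gives lifts $\widetilde\gamma_j\in H^*(E_\Sigma,\R)$ which, since fiber restriction is a ring homomorphism, restrict to the basis $\{\gamma_j\}$ on every fiber. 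Leray--Hirsch then expresses every class on $E_\Sigma$ in the form $\sum_j p^*(\beta_j)\cup \widetilde\gamma_j$ with $\beta_j\in H^*(X,\R)$, and since each $\widetilde\gamma_j$ is by construction a polynomial in elements of $\rho(\Pm_\Sigma)$, this exhibits any cohomology class as lying in the subring generated by $p^*H^*(X,\R)\cup \rho(\Pm_\Sigma)$.

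The only step requiring care is the identification of $i_x^*\rho(\Delta)$ with the toric class $\sum h_\Delta(e_i)[D_i]$ on $Y_\Sigma$. In the paper's setup this is essentially built into the definition of $\rho$ through the Leray--Hirsch isomorphism, but it is worth spelling out explicitly that the $H^*(X,\R)$-module isomorphism of Proposition~\ref{prop:LH} is realised by lifting classes on $Y_\Sigma$ to $E_\Sigma$ whose fiber restrictions recover the original classes. Once this compatibility is confirmed, the argument above yields ring-generation with no further obstacle.
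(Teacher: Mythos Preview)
Your proposal is correct and follows essentially the same approach as the paper: the paper's proof simply states that the result follows immediately from Theorem~\ref{thm:torcoh} and Proposition~\ref{prop:LH}, and your argument is precisely the natural unpacking of that sentence. The one step you flag as requiring care --- that $\rho(\Delta)$ restricts on a fiber to the toric class $\sum h_\Delta(e_i)[D_i]$ --- is, as you note, built into the definition of $\rho$ via the Leray--Hirsch identification, so there is no gap.
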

\begin{proof}
The proposition follows immediately from Theorem~\ref{thm:torcoh} and Proposition~\ref{prop:LH}.
\end{proof}

\subsection{Potential on toric bundles}
In this section we assume that $\dim_\R X = 2k$ is even. In that case, $\dim_\R E_\Sigma = 2k+2n$ is also even. 

For a manifold $M$ of even dimension, let us denote by $A^*(Y)$ the subring of even degree cohomology classes with degrees divided by 2:
\[
A^*(Y) := \bigoplus_{i=0}^{(\dim_\R Y)/2} H^{2i}(Y,\R).
\]
The ring $A^*(Y)$ is a commutative ring with a Gorenstein duality given in the following way. For a manifold $Y$, we will denote by $\ell_Y : A^*(Y)\to \R$ the linear functional given by the pairing with the fundamental class of $Y$. That is
\[
\ell_Y(\gamma) = \int_{Y} \gamma, \quad \text{ for } \gamma \in A^*(Y).
\]
By Poincar\'e duality, the function $\ell_Y$ provides a Gorenstein duality on  $A^*(Y)$ when $\dim_\R(Y )$ is even. 

Now let $E\to X$ be a $T$-principle bundle over smooth closed orientable manifold $X$ of even dimension $2k$. Let further $\Sigma\subset N$ be a smooth projective fan, and let $p:E_\Sigma \to X $ be the corresponding toric bundle. Similar to Proposition~\ref{prop:cohgen}, the ring of even cohomologies $A^*(E_\Sigma)$ is generated by the image of $A^*(X) \oplus \Pm_\Sigma$ under $p^*\oplus \rho$. Thus we  have the surjection:
\[
\Sym(A^*(X)\oplus \Pm_\Sigma) \to A^*(E_\Sigma).
\]
We will describe the kernel of the above surjection using the explicit computation of Macaulay's inverse system given in Theorem~\ref{thm:potential} and a version of the BKK theorem for toric bundles obtained in \cite{hof2020}.

First we will need some definitions. For a virtual polytope $\Delta \in \Pm_\Sigma$, define its $i$-th horizontal part $h_i(\Delta)$ to be the unique class in $A^{*}(X)$ such that
\[
  \int_{E_\Sigma} \rho(\Delta)^{n+i}\cdot p^*(\eta) = \int_X h_i(\Delta) \cdot \eta \text{,}
\]
for any $\eta\in A^{*}(X,\R)$. It is clear that $h_i(\Delta)$ is a homogeneous element of degree $i-n$ in $A^*(X)$. In particular, $h_i(\Delta) = 0$ for $i<n$. The horizontal parts of $\Delta$ were computed in \cite{hof2020}.

\begin{theorem}[\cite{hof2020}]\label{BKKhor}
  For any $\Delta \in \Pm_\Sigma$, its $n+i$-th horizontal part can be computed as
  \[
    h_{n+i}(\Delta) = \frac{(n+i)!}{i!} \int_\Delta c(\lambda)^i \diff \lambda \text{,}
  \]
  where, as before, $c:M_\R\to A^1(X)$ is the linear map given by $c(\lambda)=c_1(\cL_\lambda)$ for integral $\lambda$.
\end{theorem}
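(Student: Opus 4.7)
The plan is to lift the computation from the toric bundle $E_\Sigma\to X$ to the universal setting over the classifying space $BT$, and then invoke the Duistermaat--Heckman theorem for $Y_\Sigma$.

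First, by the projection formula, $h_{n+i}(\Delta) = p_*\bigl(\rho(\Delta)^{n+i}\bigr)$, where $p_*\colon H^{*}(E_\Sigma,\R)\to H^{*-2n}(X,\R)$ is integration along fibers. The principal $T$-bundle $E\to X$ is classified by a map $f\colon X\to BT$, and $E_\Sigma$ is the pullback of the universal toric bundle $ET\times_T Y_\Sigma\to BT$. By naturality of pushforward, it suffices to compute the universal pushforward of $\rho^{\mathrm{univ}}(\Delta)^{n+i}\in H^{*}(ET\times_T Y_\Sigma)$ and then apply $f^*$. Under the standard identifications $H^{*}(ET\times_T Y_\Sigma)\simeq H^{*}_T(Y_\Sigma)$ and $H^{*}(BT)\simeq H^{*}_T(\pt)\simeq \Sym(M_\R)$ (with $M_\R$ placed in degree~$2$), the universal pushforward becomes equivariant integration, and $\rho^{\mathrm{univ}}(\Delta)$ corresponds to the equivariant first Chern class $c_1^T(L_\Delta)\in H^2_T(Y_\Sigma)$.

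Next, I would represent $c_1^T(L_\Delta)$ in the Cartan model as $\omega_\Delta+\mu_\Delta$, where $\omega_\Delta$ is a toric Kähler form on $Y_\Sigma$ and $\mu_\Delta\colon Y_\Sigma\to M_\R$ is the moment map with image $\Delta$. Expanding by the binomial theorem, the terms $\omega_\Delta^{n+i-k}\mu_\Delta^k$ vanish for $k<i$ (since $\omega_\Delta$ is a $2$-form on a $2n$-manifold) and integrate to zero for $k>i$ on dimensional grounds, so only $k=i$ contributes, giving
\[
\int_{Y_\Sigma}(\omega_\Delta+\mu_\Delta)^{n+i} = \binom{n+i}{i}\int_{Y_\Sigma}\omega_\Delta^{n}\mu_\Delta^{i}.
\]
The Duistermaat--Heckman theorem yields $(\mu_\Delta)_*(\omega_\Delta^{n}/n!) = \chi_\Delta\,\diff\lambda$, hence $\int_{Y_\Sigma}\omega_\Delta^{n}\mu_\Delta^{i} = n!\int_\Delta\lambda^{i}\,\diff\lambda$, and the combined expression equals $\frac{(n+i)!}{i!}\int_\Delta\lambda^{i}\,\diff\lambda\in\Sym^i(M_\R)$. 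Applying $f^*$, which by construction sends $\lambda\in M_\R$ to $c(\lambda)=c_1(\cL_\lambda)\in H^2(X,\R)$, then yields the claimed identity for $h_{n+i}(\Delta)$.

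The extension from convex polytopes in $\Pm^+_\Sigma$ to virtual polytopes in $\Pm_\Sigma$ is immediate because both sides are polynomial of degree $n+i$ in $\Delta$ and agree on the positive cone. The main obstacle is the compatibility step above: carefully identifying $\rho^{\mathrm{univ}}(\Delta)$ with $c_1^T(L_\Delta)$ and showing that the bundle pushforward corresponds to equivariant integration under the Borel construction. Once this is in place, the computation reduces to a routine Duistermaat--Heckman calculation; it could alternatively be carried out via Atiyah--Bott localization at the $T$-fixed points of $Y_\Sigma$ (the vertices of $\Delta$) followed by Brion's identity to recover the integral over $\Delta$.
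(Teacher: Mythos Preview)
The paper does not prove this theorem; it is quoted from \cite{hof2020} as a black-box input to the computation of the potential $P_{E_\Sigma}$ in Theorem~\ref{thm:torpot}. So there is no proof in the paper to compare your proposal against.

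That said, your outline is a sound route to the result. The step you flag as the main obstacle is indeed the only nontrivial point: the class $\rho(\Delta)$ is defined in the paper via the Leray--Hirsch isomorphism $H^*(E_\Sigma)\simeq H^*(X)\otimes H^*(Y_\Sigma)$, which is only an isomorphism of $H^*(X)$-modules and depends on a choice of lifts of the $[D_i]$ to $H^2(E_\Sigma)$. The intended lifts are the classes of the relative divisors $E\times_T D_i\subset E_\Sigma$; with this choice $\rho(\Delta)=c_1(E\times_T L_\Delta)$, which is precisely $f^*c_1^T(L_\Delta)$ under the classifying map $f\colon X\to BT$. Once this is said, the naturality of fiber integration in the pullback square gives the reduction to $BT$, and your Cartan-model/Duistermaat--Heckman computation (which uses that for a toric manifold the DH density on the moment polytope is constant equal to $1$) goes through. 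The polynomial-extension argument to pass from honest polytopes in $\Pm_\Sigma^+$ to all of $\Pm_\Sigma$ is also correct, since both sides are polynomial in the support-function coordinates of $\Delta$.
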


Denote by $P_X$ the potential of $A^*(X)$ with respect to the generating vector space $A^*(X)$ and by $P_{E_\Sigma}$ the potential of $A^*(E_\Sigma)$ with respect to $A^*(X) \oplus \Pm_\Sigma$. The following theorem is the main result of this section. It provides a relation between potentials $P_{E_\Sigma}$ and $P_X$.

\begin{theorem}\label{thm:torpot}
  For any $\Delta \in \Pm_\Sigma$ and $\gamma\in A^*(X)$ we have
  \[
P_{E_\Sigma}(\gamma,\Delta) = \int_\Delta  P_X(c(\lambda)+\gamma) \diff \lambda.
  \]
\end{theorem}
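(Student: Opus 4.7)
The plan is to unpack both sides as formal polynomial series in the variables $(\gamma,\Delta) \in A^*(X)\oplus \Pm_\Sigma$ and match them coefficient-by-coefficient.

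First I would rewrite the left side using the definition of the potential. By construction $P_{E_\Sigma} = \Exp^*\widetilde{\ell}_{E_\Sigma}$, where $\widetilde{\ell}_{E_\Sigma}$ is the pullback of $\ell_{E_\Sigma}$ along the surjection $\Sym(A^*(X)\oplus \Pm_\Sigma)\to A^*(E_\Sigma)$ sending $(\gamma,\Delta)$ to $p^*(\gamma)+\rho(\Delta)$. Evaluated at the point $(\gamma,\Delta)$ this gives the convergent (in fact finite, by dimension) sum
\[
P_{E_\Sigma}(\gamma,\Delta)=\sum_{i=0}^\infty \frac{1}{i!}\int_{E_\Sigma}\bigl(p^*(\gamma)+\rho(\Delta)\bigr)^i
=\sum_{j,m\geq 0}\frac{1}{j!\,m!}\int_{E_\Sigma}p^*(\gamma^j)\cdot \rho(\Delta)^m,
\]
after applying the binomial theorem and reindexing $i=j+m$.

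Next I would use the defining property of the horizontal part $h_m(\Delta)$ together with Theorem~\ref{BKKhor}. Since $h_m(\Delta)=0$ for $m<n$, only terms with $m=n+k$, $k\geq 0$, survive, and for those
\[
\int_{E_\Sigma}p^*(\gamma^j)\cdot \rho(\Delta)^{n+k}
=\int_X \gamma^j\cdot h_{n+k}(\Delta)
=\frac{(n+k)!}{k!}\int_X \gamma^j\cdot\!\int_\Delta c(\lambda)^k\,\diff\lambda.
\]
Substituting back and using the identity $\tfrac{1}{(j+n+k)!}\binom{j+n+k}{j}\tfrac{(n+k)!}{k!}=\tfrac{1}{j!\,k!}$ (which is the point where the factorials from $\Exp$, the binomial expansion, and Theorem~\ref{BKKhor} cancel perfectly), I obtain
\[
P_{E_\Sigma}(\gamma,\Delta)=\sum_{j,k\geq 0}\frac{1}{j!\,k!}\int_X \gamma^j\cdot\!\int_\Delta c(\lambda)^k\,\diff\lambda.
\]

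Finally I would expand the right side. By the same computation as for $P_{E_\Sigma}$ but applied to the base, the potential of $A^*(X)$ is $P_X(\eta)=\sum_{m\geq 0}\frac{1}{m!}\int_X \eta^m$. Substituting $\eta=c(\lambda)+\gamma$ and applying the binomial theorem termwise gives
\[
\int_\Delta P_X(c(\lambda)+\gamma)\,\diff\lambda=\sum_{j,k\geq 0}\frac{1}{j!\,k!}\int_X \gamma^j\cdot\!\int_\Delta c(\lambda)^k\,\diff\lambda,
\]
which matches the expression for $P_{E_\Sigma}(\gamma,\Delta)$ derived above. The main bookkeeping point (and essentially the only substantive step beyond applying Theorem~\ref{BKKhor}) is the factorial identity above, which is precisely what makes the $\Exp$-twist in the definition of the potential compatible with the horizontal-part formula; once this is noted, the two series are term-by-term equal and the theorem follows. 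Swapping the order of integration $\int_\Delta$ and $\int_X$ is harmless since $\gamma^j$ is independent of $\lambda$.
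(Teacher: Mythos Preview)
Your proof is correct and follows essentially the same approach as the paper's: both expand the exponential defining the potential, use the definition of the horizontal parts $h_m(\Delta)$ to pass from $\int_{E_\Sigma}$ to $\int_X$, invoke Theorem~\ref{BKKhor} for the explicit value $h_{n+k}(\Delta)=\tfrac{(n+k)!}{k!}\int_\Delta c(\lambda)^k\,\diff\lambda$, and observe that the resulting factorials recombine into $P_X(c(\lambda)+\gamma)$. The only cosmetic difference is that the paper keeps $\exp(\gamma)$ as a single factor throughout (writing $\ell_X\bigl(\exp(\gamma)\cdot\sum_j h_j(\Delta)/j!\bigr)$ and then $\ell_X\bigl(\int_\Delta\exp(\gamma+c(\lambda))\,\diff\lambda\bigr)$), whereas you expand everything into monomials $\gamma^j c(\lambda)^k$ and match coefficients; your stated factorial identity is correct but slightly overstated, since after you already simplified to $\tfrac{1}{j!\,m!}$ the only cancellation needed is $\tfrac{1}{(n+k)!}\cdot\tfrac{(n+k)!}{k!}=\tfrac{1}{k!}$.
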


\begin{proof}
The proof relies on Theorem~\ref{BKKhor} and is a direct calculation otherwise. Recall that by Theorem~\ref{thm:potential}, the potentials $P_{E_\Sigma}, P_X$ are given by
\[
P_{E_\Sigma}= \Exp^*\ell_{E_\Sigma},\quad P_X= \Exp^*\ell_X
\]
In other words, the evaluations of potentials $P_{E_\Sigma}$ and $P_X$ on an elements of $A^*(X) \oplus \Pm_\Sigma$ and $A^*(X)$ respectively are given by the following formulas 
\[
P_{E_\Sigma}(\gamma,\Delta) = \ell_{E_\Sigma} \left( \sum_{j=0}^{\infty} \frac{(p^*(\gamma)+\rho(\Delta))^{j}}{j!}\right), \quad P_X(\gamma) = \ell_X \left( \sum_{j=0}^{\infty} \frac{\gamma^{j}}{j!}\right)
\]
By definition of horizontal parts $h_i(\Delta)$ of $\Delta$ we have
  \[
P_{E_\Sigma}(\gamma,\Delta) =  \ell_{E_\Sigma} \left( p^*(\exp(\gamma)) \cdot \sum_{j=0}^{\infty} \frac{\rho(\Delta)^{j}}{j!}\right) = \ell_X \left(\exp(\gamma) \cdot \sum_{j=0}^{\infty} \frac{h_j(\Delta)}{j!}\right).
  \]
Now, by Theorem~\ref{BKKhor} we get $h_j(\Delta) = 0$ for $j<n$ and
 \[
    h_{n+i}(\Delta) = \frac{(n+i)!}{i!} \int_\Delta c(\lambda)^i \diff \lambda \text{,}
 \]
otherwise. Hence we obtain
\begin{equation*}
\begin{split}
P_{E_\Sigma}(\gamma,\Delta) = \ell_X \left(\exp(\gamma) \cdot \sum_{j=0}^{\infty} \frac{h_j(\Delta)}{j!}\right) &= \ell_X \left(\exp(\gamma) \cdot \sum_{i=0}^{\infty} \frac{h_{n+i}(\Delta)}{(n+i)!}\right) = \\
\ell_X \left(\exp(\gamma) \cdot \sum_{i=0}^{\infty} \frac{1}{i!} \int_\Delta c(\lambda)^i \diff \lambda \right) &= \ell_X \left(\int_\Delta\exp(\gamma+c(\lambda)) \diff \lambda\right) = \\
\int_\Delta  \ell_X(\exp(c(\lambda)+\gamma)) \diff \lambda &= \int_\Delta  P_X(c(\lambda)+\gamma) \diff \lambda,
\end{split}    
\end{equation*}
which finishes the proof.
\end{proof}

\begin{remark}
Theorem~\ref{thm:torpot} suggests that the function $P_{E_\Sigma}\colon A^*(X) \oplus \Pm_\Sigma \to \R$ defined by
  \[
P_{E_\Sigma}(\gamma,\Delta) = \int_\Delta  P_X(c(\lambda)+\gamma) \diff \lambda.
  \]
is a polynomial. Indeed, it follows from the results of \cite{PK92} that the integral of a polynomial over a polytope defines a polynomial function on the space of virtual polytopes.
\end{remark}

As an immediate corollary of Theorems~\ref{thm:torpot} and~\ref{thm:gor} we obtain a computation of the ring of even degree cohomologies of toric bundle. Alternative descriptions were obtained in \cite{US03,roch,hof2020}.

\begin{theorem}\label{thm:cohtorbun}
 Let $E\to X$ be a principle $T$-bundle with $\dim_\R M$ even and let $E_\Sigma$ be a toric bundle given by a smooth projective fan $\Sigma$. Then the ring of even degree cohomology classes $A^*(E_\Sigma)$ is given by
 \[
 A^*(E_\Sigma) = \Sym\big(A^*(X) \oplus \Pm_\Sigma\big)/ \Ann(P_{E_\Sigma}),
 \]
 where the polynomial $P_{E_\Sigma}$ is given by
 \[
P_{E_\Sigma}(\gamma,\Delta) = \int_\Delta  P_X(c(\lambda)+\gamma) \diff \lambda.
 \]
\end{theorem}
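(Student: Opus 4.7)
The plan is to assemble this statement directly from the machinery already in place, with Theorem~\ref{thm:potential} providing the structural description of Gorenstein-dual algebras as quotients by an annihilator and Theorem~\ref{thm:torpot} providing the explicit integral formula for the potential. No new geometric input is needed beyond what has already been set up.

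First I would verify that $A^*(E_\Sigma)$ is an algebra with Gorenstein duality to which Theorem~\ref{thm:potential} applies. Since $E_\Sigma$ is a smooth closed orientable manifold of even real dimension $2k+2n$ (as $X$ has even real dimension by assumption and $Y_\Sigma$ is complex, hence even-dimensional), Poincar\'e duality equips $A^*(E_\Sigma)$ with a non-degenerate symmetric bilinear form given by the linear functional $\ell_{E_\Sigma}(\gamma) = \int_{E_\Sigma} \gamma$; this is a Gorenstein duality in the sense of Section~\ref{sec:gor}. Proposition~\ref{prop:cohgen} then provides the surjection $p^*\oplus \rho\colon A^*(X) \oplus \Pm_\Sigma \to A^*(E_\Sigma)$, which extends to a surjective algebra homomorphism
\[
\pi \colon \Sym\big(A^*(X) \oplus \Pm_\Sigma\big) \to A^*(E_\Sigma).
\]

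Next I would apply Theorem~\ref{thm:potential} directly. It gives
\[
A^*(E_\Sigma) \simeq \Sym\big(A^*(X) \oplus \Pm_\Sigma\big)/\Ann\!\big(\Exp^*\widetilde{\ell_{E_\Sigma}}\big),
\]
where $\widetilde{\ell_{E_\Sigma}} = \ell_{E_\Sigma}\circ\pi$ and $\Exp^*\widetilde{\ell_{E_\Sigma}}$ is precisely what has been named $P_{E_\Sigma}$, the potential of $A^*(E_\Sigma)$ with respect to the generating space $A^*(X)\oplus\Pm_\Sigma$. Finally I would invoke Theorem~\ref{thm:torpot}, which identifies this potential with the integral formula
\[
P_{E_\Sigma}(\gamma,\Delta) = \int_\Delta P_X(c(\lambda)+\gamma)\,\diff\lambda,
\]
yielding the claimed description of $A^*(E_\Sigma)$.

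There is essentially no obstacle in this last theorem itself: all the work has been discharged upstream, with the algebraic content packaged in Theorem~\ref{thm:potential} and the geometric content (Leray--Hirsch together with the BKK-type formula for horizontal parts from Theorem~\ref{BKKhor}) packaged in Theorem~\ref{thm:torpot}. The only point that deserves a brief remark is the implicit assertion that $P_{E_\Sigma}$, a priori a formal polynomial series, is an honest polynomial on $A^*(X)\oplus\Pm_\Sigma$; this is automatic because $A^*(E_\Sigma)$ is a finite-dimensional local Artinian algebra, so the potential terminates in finite degree, matching the remark after Theorem~\ref{thm:torpot} that integration of a polynomial over a virtual polytope is polynomial in the polytope.
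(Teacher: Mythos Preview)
Your proposal is correct and matches the paper's own treatment: the paper states Theorem~\ref{thm:cohtorbun} as an immediate corollary of Theorems~\ref{thm:torpot} and~\ref{thm:gor}, and your argument is exactly this, invoking Theorem~\ref{thm:potential} (which is Theorem~\ref{thm:gor} combined with Lemma~\ref{lem:potential}) together with Theorem~\ref{thm:torpot}, plus the already-established surjectivity from Proposition~\ref{prop:cohgen}.
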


\subsection{The ring of conditions of horospherical homogeneous spaces}
The ring of conditions is a version intersection theory for not-necessarily complete homogeneous space. It was introduced by De Concini and Procesi in \cite{DP85}  as a solution to Hilbert's 15'th problem, i.e. as a ring in which one can study classical enumerative problems. As another corollary of our main theorem, we obtain a description of the ring of conditions of horospherical homogeneous space.

Let $G$ be a reductive group over $\C$ and let $U\subset G$ be its maximal unipotent subgroup. The subgroup $H\subset G$ is called {\emph horospherical} if it contains $U$. The homogeneous space $G/H$ is called horospherical if $H$ is a horospherical subgroup of $G$. Let $P_H$ be the normalizer of $H$, it is well-known that $P_H$ is a parabolic subgroup of $G$, and that $T_H=P_H/H$ is an algebraic torus. Moreover, the homogeneous space $G/H$ is a total space of $T_H$-principal bundle over generalized flag variety $G/P_U$ (see for example \cite{Pasq,pasquier} for details).

Toric bundles $E_\Sigma:=G/H\times_{T_H} Y_\Sigma$ associated to the principle $T_H$-bundle $G/H\to G/P_H$ (also called toroidal horospherical varieties) are known to have only even degree cohomology classes. Thus Theorem~\ref{thm:cohtorbun} computes full cohomology rings in this case. In fact, by Bia {\l}ynicki-Birula decomposition (\cite{BB73}),  varieties $E_\Sigma$ have a paving by affine spaces, so that $H^*(E_\Sigma,\Z)$ is torsion free. Hence, Theorem~\ref{thm:cohtorbun}, in particular, computes integer cohomology of $E_\Sigma$. Moreover, in the situation as above the ring of conditions $C^*(G/H)$ is given by
\[
C^*(G/H) = \varinjlim A^*(E_\Sigma),
\]
where the limit is taken over all smooth projective fans $\Sigma$ (see \cite[Section 6.3]{DP85}).

Thus we obtain the following theorem.
\begin{theorem}\label{thm:cond}
Let $G/H$ be a horospherical homogeneous space and let $E_\Sigma$ be its smooth projective toroidal compactification. Then the cohomology ring $H^{*/2}(E_\Sigma,\R)=A^*(E_\Sigma)$ of $E_\Sigma$ is given by
 \[
A^*(E_\Sigma) = \Sym\big(A^*(G/P_H) \oplus \Pm_\Sigma\big)/ \Ann(P_{G/H}),
\]
where $A^*(G/P_H)$ is the cohomology ring of $G/P_H$ and
 \[
P_{G/H}(\gamma,\Delta) = \int_\Delta  P_{G/P_H}(c(\lambda)+\gamma) \diff \lambda.
 \]
Moreover the ring of conditions of $G/H$ is given by 
  \[
C^*(G/H) = \Sym\big(A^*(G/P_H) \oplus \Pm\big)/ \Ann(P_{G/H}).
\]
\end{theorem}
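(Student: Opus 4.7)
The first part of the statement is almost immediate: apply Theorem~\ref{thm:cohtorbun} with base $X = G/P_H$ and with the principal $T_H$-bundle $G/H \to G/P_H$, for which $E_\Sigma = G/H \times_{T_H} Y_\Sigma$ is the associated toric bundle. To invoke that theorem I only need $\dim_\R G/P_H$ to be even and the pair $A^*(G/P_H)$, $A^*(E_\Sigma)$ to coincide with the full real cohomology rings. The first is standard: generalized flag varieties are complex manifolds. The second follows from the Bia{\l}ynicki-Birula paving (already cited in the excerpt): both $G/P_H$ and $E_\Sigma$ admit cell decompositions by affine spaces, hence their cohomology is concentrated in even degrees and is in fact torsion-free. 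Substituting $X = G/P_H$, $P_X = P_{G/P_H}$ into the formula of Theorem~\ref{thm:cohtorbun} yields precisely the stated presentation of $A^*(E_\Sigma)$ with the polynomial $P_{G/H}(\gamma,\Delta) = \int_\Delta P_{G/P_H}(c(\lambda)+\gamma)\,\diff\lambda$.

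For the ring of conditions, the plan is to compute the direct limit $C^*(G/H) = \varinjlim_\Sigma A^*(E_\Sigma)$, taken over smooth projective fans $\Sigma$ ordered by refinement. Under refinement $\Sigma' \succeq \Sigma$ the induced map $\Pm_\Sigma \hookrightarrow \Pm_{\Sigma'}$ is the natural inclusion of spaces of piecewise linear functions, and the colimit is precisely $\Pm = \bigcup_\Sigma \Pm_\Sigma$, the space of all virtual polytopes. The key point is that the potentials $P_{E_\Sigma}$, viewed as functions on $A^*(G/P_H) \oplus \Pm_\Sigma$, are the restrictions of a single polynomial function $P_{G/H}$ on $A^*(G/P_H) \oplus \Pm$, because the defining integral formula $\int_\Delta P_{G/P_H}(c(\lambda)+\gamma)\,\diff\lambda$ makes sense for every virtual polytope $\Delta$ and depends only on $\Delta$, not on the chosen refining fan.

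Given this compatibility of potentials, the second half reduces to a statement about symmetric algebras and annihilators commuting with colimits. Since $\Sym$ commutes with direct limits of vector spaces and $A^*(G/P_H)$ is fixed, one has
\[
\varinjlim_\Sigma \Sym\bigl(A^*(G/P_H) \oplus \Pm_\Sigma\bigr) = \Sym\bigl(A^*(G/P_H) \oplus \Pm\bigr).
\]
The transition maps $A^*(E_\Sigma) \to A^*(E_{\Sigma'})$ induced by refinement are ring homomorphisms compatible with the pairings (since they come from the Gysin maps associated with the birational morphisms $E_{\Sigma'} \to E_\Sigma$ and preserve the top-degree integration). By Theorem~\ref{thm:potential}, each quotient is $\Sym(A^*(G/P_H) \oplus \Pm_\Sigma)/\Ann(P_{G/H}|_{\Pm_\Sigma})$, and taking the colimit of these identifications gives
\[
C^*(G/H) = \Sym\bigl(A^*(G/P_H) \oplus \Pm\bigr)/\Ann(P_{G/H}),
\]
because an element of $\Sym$ annihilates $P_{G/H}$ if and only if it annihilates $P_{G/H}|_{\Pm_\Sigma}$ for every $\Sigma$ containing its variables (finitely many support directions suffice).

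The main obstacle I anticipate is the second paragraph: verifying carefully that the potentials glue to one well-defined polynomial on the infinite-dimensional space $\Pm$ and that the refinement maps between $A^*(E_\Sigma)$'s correspond under the Gorenstein presentation to the evident inclusions of symmetric algebras modulo the compatible annihilators. Once this compatibility is pinned down, both conclusions of Theorem~\ref{thm:cond} follow formally from Theorem~\ref{thm:cohtorbun} and the general principle that $\Ann$ of a polynomial on a union of subspaces is the union of the annihilators of its restrictions.
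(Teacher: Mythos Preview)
Your proposal is correct and follows exactly the route the paper takes: the paper gives no formal proof of this theorem beyond the paragraph preceding it, which invokes Theorem~\ref{thm:cohtorbun} for the first assertion and the De~Concini--Procesi direct-limit description $C^*(G/H)=\varinjlim A^*(E_\Sigma)$ for the second; you have simply spelled out the colimit step (that $\Pm=\bigcup_\Sigma\Pm_\Sigma$, that the integral formula is fan-independent so the potentials glue, and that $\Sym$ and $\Ann$ behave well under these filtered unions). One terminological slip: the transition maps in the direct system are the \emph{pullbacks} $\pi^*\colon A^*(E_\Sigma)\to A^*(E_{\Sigma'})$ along the proper birational morphism $\pi\colon E_{\Sigma'}\to E_\Sigma$, not Gysin maps; it is $\pi^*$ that preserves top-degree integration and is compatible with the inclusions $\Pm_\Sigma\hookrightarrow\Pm_{\Sigma'}$.
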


\bibliographystyle{alpha}
\bibliography{tb}

\Addresses

\end{document}